\def\NAT@def@citea{\def\@citea{\NAT@separator}}
\theoremstyle{plain}
\newtheorem{thm}{Theorem}[section]
\newtheorem{lem}[thm]{Lemma}
\newtheorem{cor}[thm]{Corollary}
\newtheorem{prop}[thm]{Proposition}
\newtheorem{defn}[thm]{Definition}
\theoremstyle{definition}
\newtheorem{exam}[thm]{Example}
\theoremstyle{remark}
\newtheorem{rem}[thm]{Remark}
\newtheorem{nota}[thm]{Notation}
\newcommand{\bw}{\bigwedge}
\newcommand{\w}{\wedge}
\renewcommand{\uphi}{\underline{\phi}}
\newcommand{\End}{\mathrm{End}}
\newcommand{\ovD}{\overline{D}}
\newcommand{\OvD}{\overline{\D}}
\newcommand{\N}{\mathbb{N}}
\newcommand{\Z}{\mathbb{Z}}
\newcommand{\sra}{\rightarrow}
\newcommand{\bfe}{\mathbf{e}}
\newcommand{\lra}{\longrightarrow}
\newcommand{\tr}{\mathrm{tr}}
\newcommand{\KK}{\mathbb{K}}
\newcommand{\D}{\mathcal{D}}
\newcommand{\E}{\mathcal{E}}
\newcommand{\llb}{\llbracket}
\newcommand{\rrb}{\rrbracket}
\newcommand{\uz}{\underline{z}}
\renewcommand{\d}{\mathrm{d}}
\renewcommand{\a}{\underline{\Acal}}
\newcommand{\m}{{\underline{i}}}
\newcommand{\Acal}{\mathcal{A}}
\newcommand{\Bcal}{\mathcal{B}}
\begin{document}


\title{Multivariate Hasse--Schmidt Derivation on Exterior Algebras}

\author{
	\name{Fereshteh Bahadorykhalily} \thanks{\href{mailto:f.bahadori.khalili@gmail.com}{f.bahadori.khalili@gmail.com}}
\affil{Institute of Advanced Studies in Basic Science, Zanjan, Iran}
}

\maketitle

\begin{abstract}
The purpose of this short note is to consider multi-variate Hasse-Schmidt
derivations on exterior algebras and to show how they easily provide
remarkable identities, holding in the algebra of square matrices, which
generalise the classical theorem of Cayley-Hamilton.
\end{abstract}

\begin{keywords}
Hasse--Schmidt derivation on exterior algebra, traces of ordered tuples of endomorphisms, Generalised Cayley-Hamilton theorem, Exterior algebra.
\end{keywords}
\begin{amscode}
	15A75
\end{amscode}

\section{Introduction}\label{sec2}
\claim{} The main goal of this paper is to propose a new notion of {\em traces} of an $n$-tuple of endomorphisms of an $n$-dimensional vector space relying on 
the notion of multivariate Hasse--Schmidt (HS) derivation on an exterior algebra. We apply this notion to prove a Cayley-Hamilton (CH) like identity holding for $n$-tuples of endomorphisms
(Theorem \ref{thm:thm47}). It gives back the classical CH theorem for $n=1$. 

HS-derivations on an exterior algebra were defined  years ago by Gatto in \cite{SCHSD} and further 
developed in \cite{SCGA}, up to the most recent applications as 
in \cite{GatSch, pluckercone}. See also the book \cite{HSDGA} for a systematic presentation. 

If $V$  is an $n$-dimensional vector space, a HS--derivation on the exterior algebra $\bw V$ of $V$ is a 
map $\D(z):\bw V\sra \bw V\llb z\rrb$ such that $\D(z)(u\w v)=\D(z)u\w \D(z)v$. 
As remarked in the quite recent preprint \cite{behzad}, if $\D_1(z_1),\ldots, 
\D_k(z_k)$ ($k>1$) are HS-derivations,  their product is a  multivariate HS-derivation in the sense of this paper,  namely an $\End_\KK(\bw V)$-valued   
formal power series in the $n>1$ indeterminates $\uz:=(z_1,\ldots,z_n)$, with 
coefficients in $End_\KK(\bw V)$, but of a very special kind with respect to those considered here.

More precisely (see Definition \ref{def3:multv}), a   {\em multivariate} HS-derivation on
$\bw V$ is an $\End_\KK(\bw V)$-valued formal power series
$$
\D(\uz):=\sum_{\m\in\N^n}D_\m \uz^\m,
$$
such that $\D(\uz)(u\w v)=\D(\uz)u\w \D(\uz)v$. For $n=1$ and $z:=z_1$, this is 
evidently \cite[Definition 2.1]{SCHSD}. The
purpose of this note is then to extend some elementary properties, holding for 
univariate HS-derivations on exterior algebras, to the multivariate ones. 
Special emphasis will be put on what in \cite{SCHSD} and 
\cite{SCGA,pluckercone,HSDGA} is referred to as {\em integration by parts} (see 
Proposition~\ref{prop1}). In particular,  we will learn how to attach  scalars, 
that we think appropriate to call {\em traces}, to a finite sequence of 
endomorphisms. This in turn generalises the main result of \cite{GatSch}.

\claim{} A quick description of our main result follows. Let $\uphi:=(\phi_1,
\ldots,\phi_n) \in \End_\KK(V)^n$ be an ordered $n$-tuple of endomorphisms of $V$. By 
Proposition \ref{prop3:36}, there is one and only one multivariate HS--derivation $$\OvD(\uz)=\sum_{\m\in\N^n}(-1)^{|\m|}\ovD_\m z^\m,$$
such that
$
\OvD(\uz)_{|_V}=1-(\phi_1z_1+\cdots+\phi_nz_n).
$
The  $\m$--{\em trace} $\tau_{\m}(\uphi)$ is the scalar defined by
$$
\tau_{\m}(\uphi)\xi=\ovD_{\m}\xi,
$$
where  $\xi$ is any non zero element of $\bw^nV$. If $n=1$, then $\tau_{(i)}
(\phi)$ are precisely the {\em traces} as defined in \cite{GatSch},  which in 
turn coincide with the coefficients of the characteristic polynomial $\det(t
\cdot\mathrm{id}_V-\phi)$ of $\phi$ (e.g. $\tau_{(n)}$ is the determinant).
For example, if $A=(a_{ij})$ and $B=(b_{ij})$ are $2\times 2$  matrices thought 
of as endomorphisms of $\KK^2$,  then $\tau_{(0,0)}(A,B)=1$, $\tau_{(1,0)}
(A,B)=\tr(A)=a_{11}+a_{22}$, $\tau_{(0,1)}(A,B)=\tr(B)=b_{11}+b_{22}$, $
\tau_{(2,0)}(A,B)=\det(A)$,  $\tau_{(0,2)}(A,B)=\det(B)$
and, finally
$$
\tau_{(1,1)}(A,B)=\begin{vmatrix}a_{11}&b_{12}\cr a_{21}&b_{22}\end{vmatrix}+\begin{vmatrix}b_{11}&a_{12}\cr b_{21}&a_{22}\end{vmatrix}.
$$
Clearly $\tau_{\m}(\uphi)=0$ for all $\m$ such that $|\m|\geq i+1$. See Example \ref{extraces} for the explicit expression of all the traces $\tau:\N^3\sra \KK$ of triples of $3\times 3$ matrices. 

\medskip
\noindent
The notion of $\m$--trace enables us to state the following:

\smallskip
\noindent
{\bf MAIN THEOREM  \ref{thm:thm47}.} {\em For all $n$-tuples of endomorphisms of $V$,  the identity:
	$$
	\sum_{k=0}^n(-1)^k\dfrac{1}{k!}\sum_{\sigma\in S_n}\tau_{\bfe_{\sigma(1)}+\cdots+\bfe_{\sigma(k)}}(\uphi)\cdot (\phi_{\sigma(k+1)}\circ \ldots  \circ \phi_{\sigma(n)})=0,
	$$
	holds for all $\uphi\in \End_\KK(V)^n$, 
	where $(\bfe_1,\ldots,\bfe_n)$ denotes the canonical basis of $\Z^n$ 
	and $S_n$  the symmetric group on $n$ letters.
}

\smallskip
By their very definition,  the scalars $\tau_{\m}(\uphi)$ are invariants of $\uphi$, i.e. they are attached to the endomorphisms themselves and not to the particular bases one uses to represent them. This fact  can be easily checked by hand, namely: {\em the traces of $n$-tuples of matrices only depend on their conjugacy classes}. The practical output is that the multi--dimensional array $(\tau_{\m}(\uphi))$   can be computed in terms of matrix products upon replacing each $\phi_i$ by its matrix with respect to a fixed $\KK$-basis. In particular we have 

\medskip
\noindent
{\bf Corollary.} {\em  For all ordered $n$-tuples of  $\KK$-valued $n\times n$ matrices $\a=(A_1,\ldots, A_n)$, the following identity holds:
	$$
	\sum_{k=0}^n(-1)^k\dfrac{1}{k!}\sum_{\sigma\in S_n}\tau_{\bfe_{\sigma(1)}+\cdots+\bfe_{\sigma(k)}}(\a)\cdot (A_{\sigma(k+1)} \cdots A_{\sigma(n)})=0,
	$$
where $\tau_{\m}(\a)$ denotes the trace of the $n$-tuples of
matrices thought of as endomorphisms of $\KK^n$.
}

\medskip
\noindent

A few comments are in order. First of all, if $A_1=\cdots=A_n=A$, and  provided we are in characteristic zero, the theorem gives back the Cayley--Hamilton identity,  according to which each matrix is a root of its own characteristic polynomial. Secondly, as mentioned at the very beginning of this introduction, the theorem can be related to some trace identity relations as in \cite{procesi} and it would be interesting to see how much of the formalism of the present paper can be generalized to deal with the more general situations studied there.
As a matter of example, if $(A,B)\in (\KK^{2\times 2})^2$ are as above, comparing the coefficients of $z_1^2$ in the integration by parts formula, one obtains the identity
\[
\tr(A^2)+2\det(A)-\tr(A)^2=0,
\]
which is evidently true if $A$ is diagonal.

\smallskip
\claim{} The Main Theorem also suggests a number of amusing corollaries.
Denote by $C_i(A)$ the $i$-th column of a matrix $A$. Then our main result shows that the bilinear map $\star:\KK^{2\times 2}\times \KK^{2\times 2}\sra \KK$ given by
$$
A\star B=AB-a_{11}B-b_{22}A+\det(C_1(A),C_2(B))\mathbbm{1}_{2\times 2}
$$
is skew symmetric, i.e.,
\begin{equation}\label{eq:skws}
A\star B+B\star A=0.
\end{equation}
An easy check shows however that $(\KK^{2\times 2}, \star)$ is not a Lie algebra.
Putting $A=B$ one obtains
$$
0=A\star A=2\cdot \left(A^2-\tr(A)A+\det(A)\mathbbm{1}_{2\times 2}\right)=0
$$
that explains in which sense formula \eqref{eq:skws} generalises Cayley-Hamilton theorem for $2\times 2$ matrices, as announced. 

\noindent
Similarly, let $\a=(A,B,C)$, where  $A=(a_{ij})$, $B=(b_{ij})$, $C=(c_{ij})$ are $3\times 3$ matrices.
Then the tri-linear map 
$
\KK^{3\times 3}\times \KK^{3\times 3}\times \KK^{3\times 3}\sra \KK^{3\times 3}
$ 
defined by
\begin{align*}
A\star B\star C&=ABC\\
&-a_{11}BC-b_{22}CA-c_{33}AB\\
&+(a_{11}b_{22}-a_{21}b_{12})C+(b_{22}c_{33}-b_{32}c_{23})A+(a_{11}c_{33}-a_{31}c_{13})B\\&-\det(A\bfe_1,B\bfe_2, C\bfe_3)\mathbbm{1}_{3\times 3}
\end{align*}
is skew-symmetric (Cf. Example \ref{ex:exam52}), i.e.,
$$
s(A)\star s(B)\star s(C)=(-1)^{|s|}A\star B\star C
$$ 
for all bijections $s:\{A,B,C\}\sra \{A,B,C\}$, where by $|s|$ we have denoted the sign of the permutation. Again $A\star A\star A=0$ phrases the content of the  Cayley--Hamilton theorem for $3\times 3$ matrices.

\claim{} Some work  concerning the study of the properties of 
the function $\tau_{\uphi}:\N^n\sra \KK$, associating to each multi-index  $\m
\in \N^n$ the multidimensional matrix of their traces, is in progress.  Especially we are guessing applications to the theory of the generic linear PDEs with constant coefficients, in the same fashion as in the paper \cite{GatSch1}, and to the generalisation of the formalism proposed in \cite{HSDGA, pluckercone},
to produce more general kinds of vertex operators, naturally associated to 
multivariate HS--derivations of a more general nature than the very special ones 
already dealt with in \cite{behzad}.

\noindent

\claim{} The paper is organized as follows. In Section \ref{sec:preli}, we 
recall some preliminaries on HS--derivations on exterior algebras, as in e.g. 
\cite{SCHSD, HSDGA}, to keep this note as self contained as possible.  We 
also recall, in this section,  the statement of \cite[Theorem 2.3]{GatSch}  
concerning a Cayley--Hamilton vanishing theorem holding on the exterior 
algebra of a vector space. Section \ref{sec:sec3} sets a minimum of 
foundational material regarding multi-variate HS--derivations, putting again 
a special emphasis on the integration by parts formula. Section 
\ref{sec4:main}, instead, gathers together the main definitions, lemmas and 
propositions recalled up to then, enabling us to formulate and prove the  
Main Theorem \ref{thm:thm47}. A few applications and examples, with the 
purpose of making our main result concrete and explicit, are listed in 
Section \ref{sec:sec5}. These examples may also suggest to the reader how to 
prove the Theorem~\ref{thm:thm47} in special cases and also served us as main 
sources of experiments.

\section{Preliminaries and Notation}\label{sec:preli} 
The main general references for this section, where the interested reader can find the omitted details, are \cite[Section 2]{pluckercone} and/or \cite{HSDGA}.

Let
$V$
be a vector space of dimension $n$ over a field $\KK$, and let 
$(u_1, \cdots , u_{n})$
be one  $\KK$-basis. 
We denote by $\bigwedge V $ the exterior algebra of $V$. It is a direct sum $\bigoplus_{j\geq 0}\bigwedge^j V$ of exterior powers of $V$. Recall that
$\bigwedge^0 V=\KK$ while, 
if $j>0$, the vector space $\bigwedge^j V$
has  a basis
$u_{i_1} \wedge \cdots \wedge u_{i_j}$ with 
$i_1 < i_2 < \cdots <i_k$.
If 
$\gamma \in S_j$,
the symmetric group of 
$\{1,2,\cdots,j\}$,
then
$u_{i_1} \wedge \cdots \wedge u_{i_j} =sgn(\gamma)u_{i_{\gamma(1)}} \wedge 
\cdots \wedge u_{i_{\gamma(j)}}$.

Let
$\bigwedge V \llb z \rrb$
be the $ \bigwedge V$-valued formal power series with its natural algebra 
structure.
\begin{defn} (See \cite[Definiton 2.1]{SCHSD}). 
A HS--derivation on the exterior algebra $ \bigwedge V $ is a 
$\KK$-linear map 
$\D(z): \bigwedge V \longrightarrow \bigwedge V \llb z \rrb ,$
such that for $ u,v\in \bigwedge V $
\begin{equation}\label{first}
\D(z)(u\wedge v)= \D(z) u \wedge \D(z) v.
\end{equation}
\end{defn}
We shall write $\D(z)$ in the form $\sum\limits_{i\geq 0}D_{i}z^i$, where 
$D_{i}\in \End_\KK(\bw V)\llb z\rrb$.
\begin{prop} Let $\overline{\D}(z)\in End_\KK(\bw V)$ such that $\overline{\D}
(z)\cdot \D(z)=1$. Then it is a HS--derivation, said to be the {\em inverse} of 
$\D(z)$.
\end{prop}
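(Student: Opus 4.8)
The plan is to deduce the multiplicative property of $\OvD(z)$ from that of $\D(z)$ by an injectivity argument, after first upgrading the one-sided relation $\OvD(z)\cdot\D(z)=1$ to a two-sided one. First I would observe that the hypothesis forces the constant term $D_0$ of $\D(z)=\sum_{i\geq 0}D_iz^i$ to be invertible in $\End_\KK(\bw V)$: evaluating $\OvD(z)\cdot \D(z)=1$ at $z=0$ exhibits a left inverse of $D_0$, and since $\bw V$ is finite dimensional a left inverse in $\End_\KK(\bw V)$ is automatically two-sided. Consequently $\D(z)$ is a unit of the power series ring $\End_\KK(\bw V)\llb z\rrb$, its inverse is unique and two-sided, and therefore coincides with $\OvD(z)$; in particular $\D(z)\cdot\OvD(z)=1$ as well. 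From either relation it follows that $\D(z)$, regarded as a map $\bw V\sra \bw V\llb z\rrb$, is injective, since $\D(z)w=0$ implies $w=\OvD(z)\D(z)w=0$.

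The core step is then short. Fix $u,v\in\bw V$ and apply the coefficientwise $z$-linear extension of the operator $\D(z)$ to the element $\OvD(z)u\w\OvD(z)v\in\bw V\llb z\rrb$. Using that $\D(z)$ satisfies \eqref{first} also for arguments in $\bw V\llb z\rrb$, together with $\D(z)\cdot\OvD(z)=1$, I would compute
$$
\D(z)\bigl(\OvD(z)u\w\OvD(z)v\bigr)=\D(z)\OvD(z)u\,\w\,\D(z)\OvD(z)v=u\w v.
$$
On the other hand $\D(z)\bigl(\OvD(z)(u\w v)\bigr)=u\w v$ by the same relation. Thus the two elements $\OvD(z)(u\w v)$ and $\OvD(z)u\w\OvD(z)v$ have the same image under the injective operator $\D(z)$, whence they are equal and $\OvD(z)$ is a HS--derivation.

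The main point requiring care is the passage from the left inverse to a two-sided inverse, which is where finite dimensionality of $\bw V$ (equivalently, invertibility of $D_0$) enters; once $\D(z)\cdot\OvD(z)=1$ and the injectivity of $\D(z)$ are in hand, the multiplicativity of $\OvD(z)$ is essentially forced. A minor technical point is to justify that the defining identity \eqref{first} for $\D(z)$ extends from $\bw V$ to $\bw V\llb z\rrb$, which is immediate since $\D(z)$ acts $z$-linearly and $\w$ is bilinear over $\KK\llb z\rrb$.
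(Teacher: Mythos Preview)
Your argument is correct and essentially the same as the paper's. The paper does not prove this univariate proposition explicitly, but it proves the multivariate analogue later by the one-line computation
\[
\OvD(\uz)(u\wedge v)=\OvD(\uz)\bigl(\D(\uz)\OvD(\uz)u\wedge\D(\uz)\OvD(\uz)v\bigr)=\OvD(\uz)\D(\uz)\bigl(\OvD(\uz)u\wedge\OvD(\uz)v\bigr)=\OvD(\uz)u\wedge\OvD(\uz)v,
\]
which is your core step read from the other side: instead of invoking injectivity of $\D(z)$, the paper simply applies $\OvD(z)$ to both members of $\D(z)\bigl(\OvD(z)u\wedge\OvD(z)v\bigr)=u\wedge v=\D(z)\bigl(\OvD(z)(u\wedge v)\bigr)$ and uses $\OvD(z)\D(z)=1$. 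Your extra paragraph upgrading the one-sided hypothesis $\OvD(z)\D(z)=1$ to a two-sided inverse via invertibility of $D_0$ in the finite-dimensional algebra $\End_\KK(\bw V)$ is a genuine improvement in rigor, since the paper's computation tacitly uses $\D(z)\OvD(z)=1$ as well. One small nit: you state injectivity of $\D(z)$ as a map $\bw V\to\bw V\llb z\rrb$ but then apply it to power-series inputs; the same left-inverse argument gives injectivity on $\bw V\llb z\rrb$ too, so this is only a phrasing issue.
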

If $ D_0 $ is invertible in $\End_\KK(\bw V)\llb z\rrb$, then $\D(z)$ is invertible 
in $\bigwedge V\llb z \rrb$. We shall write 
$\D(z) = \sum\limits_{i \geq 0} D_i z^i$ and $ \OvD(z)= \sum\limits_{i \geq 0} 
(-1)^i\ovD_i z^i$, 
with $ D_i,\ovD_i\in End_\KK(\bigwedge V) $.

\claim{} Let  $HS(\bigwedge V)$ be the set of all HS--derivations on
$\bigwedge V$.
It is easily seen that the invertible HS--derivations on
$\bigwedge V$ form a group under
composition.
Formula \eqref{first} is equivalent to the fact that each $D_i$ satisfies the Leibniz rule, i.e for all
$u,v \in \bigwedge V$ \cite[Proposition 4.1.5]{SCHSD}
\[ D_i (u \wedge v)= \sum\limits_{k=0}^i D_ku \wedge D_{i-k}v.\]
\begin{prop}\label{prop:prop23}
Let $f\in End_{\KK}(V)$ and let 
$f(z)=\sum\limits_{i \geq 0} (-1)^if^i z^i: V \rightarrow  V\llb z \rrb.$
Then there is a	unique HS-derivation
$ \D^f(z)\in HS(\bigwedge V) $
such that $ \D^f(z)|_{V}=f(z) $.
\end{prop}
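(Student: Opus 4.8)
The plan is to read the required property as an instance of the universal property of the exterior algebra. Consider the $\KK$-linear map $\lambda := f(z): V\sra V\llb z\rrb \subseteq \bw V\llb z\rrb$, and regard $\bw V\llb z\rrb$ as a unital associative $\KK$-algebra under the wedge product (extended $\KK\llb z\rrb$-bilinearly). A $\KK$-linear map $\D(z):\bw V\sra\bw V\llb z\rrb$ satisfying \eqref{first} and sending $1$ to $1$ is precisely a unital $\KK$-algebra homomorphism, so the proposition amounts to the statement that $\lambda$ extends uniquely to such a homomorphism.

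First I would dispose of the normalization $\D(z)1=1$ that is implicit in the claim $\D^f(z)\in HS(\bw V)$. Setting $u=v=1$ in \eqref{first} shows that $\D(z)1$ is an idempotent of $\bw V\llb z\rrb$; writing it as $\sum_{k\ge 0}c_k$ with $c_k\in\bw V\llb z\rrb$ homogeneous of exterior degree $k$, the degree-zero part forces $c_0\in\{0,1\}$ (the only idempotents of the local ring $\KK\llb z\rrb$), and, discarding the degenerate solution $\D(z)=0$, an induction on the exterior degree using $c_k=\sum_{i+j=k}c_i\w c_j$ gives $c_k=0$ for all $k\ge 1$ in every characteristic. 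Hence every nonzero HS-derivation is unital. Uniqueness is then immediate: $\bw V$ is generated as a unital $\KK$-algebra by $V$, so \eqref{first} determines $\D(z)$ on each decomposable element as $\D(z)(u_{i_1}\w\cdots\w u_{i_j})=f(z)u_{i_1}\w\cdots\w f(z)u_{i_j}$, and by $\KK$-linearity on all of $\bw V$.

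For existence I would invoke the universal property of $\bw V$: any $\KK$-linear map $\lambda:V\sra A$ into a unital associative $\KK$-algebra $A$ with $\lambda(v)\w\lambda(v)=0$ for all $v\in V$ extends to a unique unital algebra homomorphism $\bw V\sra A$. The one hypothesis to verify, and the only real point of the proof, is the alternating condition $f(z)v\w f(z)v=0$ for the specific $\lambda=f(z)$. Here $f(z)v=\sum_i(-1)^i f^iv\, z^i$ has all of its coefficients in $\bw^1V$, so the coefficient of $z^m$ in $f(z)v\w f(z)v$ equals $(-1)^m\sum_{i+j=m}f^iv\w f^jv$; pairing the summands indexed by $(i,j)$ and $(j,i)$, which cancel by the skew-symmetry of $\w$ on degree-one elements (the diagonal term $f^{m/2}v\w f^{m/2}v$ vanishing on its own), shows this coefficient is zero. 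The resulting homomorphism is the desired $\D^f(z)$, and its constant term restricts to $f^0=\mathrm{id}_V$ on $V$, so $D_0=\mathrm{id}_{\bw V}$ is invertible and $\D^f(z)$ is an invertible HS-derivation. The main obstacle is thus merely this sign bookkeeping; alternatively one can bypass the universal property entirely by \emph{defining} $\D^f(z)$ on the basis $u_{i_1}\w\cdots\w u_{i_j}$ via $f(z)u_{i_1}\w\cdots\w f(z)u_{i_j}$ and checking \eqref{first} by expanding both sides, the verification again reducing to the vanishing of wedge squares of degree-one power series.
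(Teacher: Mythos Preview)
Your argument is correct and matches the paper's approach: the paper defers to Proposition~\ref{prop3:36}, whose proof likewise extends $f(z)$ from $V$ to $\bw V$ via the universal property of the exterior algebra (phrased there degree by degree, as alternating multilinear maps $V^{\otimes k}\to\bw^kV\llb z\rrb$, rather than via the algebra-level universal property you invoke). Your explicit verification that $f(z)v\w f(z)v=0$ and your treatment of the normalization $\D(z)1=1$ are in fact more detailed than what the paper supplies.
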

\begin{proof}
The proof of \cite[Page 68]{HSDGA} will be generalised to the multivariate 
case in the Proposition \ref{prop3:36} below.
\end{proof}


\claim{\rm } Within the same situation of Proposition \ref{prop:prop23}, denote 
by $\OvD^f(z)$ the  inverse of $\D^f(z)$. For each $i\geq 0$, let $e_i$ be the 
eigenvalue of $\ovD^f_i$ thought of as an element of $End_\KK(\bigwedge^nV)$. 
\begin{thm}[\cite{GatSch}]
If $f\in End_\KK(V)$ and $\D(z)$ is the corresponding  HS--derivation on 
$\bw V$, then the equality
\begin{equation}
D_n^f-e_1D_{n-1}^f+\cdots+(-1)^ne_n{\mathbbm 1}=0, \label{eq:gsch}
\end{equation}
holds in $\End_\KK(\bw V)$.
\end{thm}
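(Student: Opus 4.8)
The plan is to read the left-hand side of \eqref{eq:gsch} as a single Cauchy product and then to control its $z$-degree grading component by grading component. Set $e(z):=\sum_{i\geq 0}(-1)^ie_iz^i$ for the scalar by which $\OvD^f(z)$ acts on the one-dimensional space $\bw^nV$; by multiplicativity $e(z)=\det\bigl(\OvD^f(z)_{|_V}\bigr)$, a polynomial of degree $\leq n$ in $z$. Since $D_0^f=\mathbbm{1}$ and $e_0=1$, the identity to be proved is exactly that the coefficient of $z^n$ in the product $e(z)\cdot\D^f(z)$ vanishes, because
\[
[z^n]\bigl(e(z)\,\D^f(z)\bigr)=\sum_{i=0}^n(-1)^ie_iD_{n-i}^f=D_n^f-e_1D_{n-1}^f+\cdots+(-1)^ne_n\mathbbm{1}.
\]
So I would reduce everything to showing that $e(z)\,\D^f(z)$ has $z$-degree $<n$ on each $\bw^jV$ with $j\geq 1$.

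The heart of the argument is an integration-by-parts identity, obtained by evaluating $\OvD^f(z)$ on a top-degree vector in two different ways. Fixing $\eta\in\bw^jV$ and an auxiliary $\zeta\in\bw^{n-j}V$, the product $\D^f(z)\eta\w\zeta$ lies in $\bw^nV\llb z\rrb$, where $\OvD^f(z)$ acts as the scalar $e(z)$; this gives $\OvD^f(z)(\D^f(z)\eta\w\zeta)=\bigl(e(z)\D^f(z)\eta\bigr)\w\zeta$. Computing the same quantity through the multiplicativity of $\OvD^f(z)$ and the inverse relation $\OvD^f(z)\D^f(z)=\mathbbm{1}$ instead yields $\OvD^f(z)(\D^f(z)\eta\w\zeta)=\eta\w\OvD^f(z)\zeta$. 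Equating the two expressions,
\[
\bigl(e(z)\,\D^f(z)\eta\bigr)\w\zeta=\eta\w\OvD^f(z)\zeta\qquad\text{in }\bw^nV\llb z\rrb .
\]

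I would then read off the degree bound from the right-hand side. Because $\OvD^f(z)_{|_V}=\mathbbm{1}+fz$ is linear in $z$ (being the power-series inverse on $V$ of $\D^f(z)_{|_V}=(\mathbbm{1}+fz)^{-1}$), the element $\OvD^f(z)\zeta$ is a product of $n-j$ factors $\OvD^f(z)u=u+zf(u)$ and so has $z$-degree at most $n-j$; hence so does $\eta\w\OvD^f(z)\zeta$. Writing $e(z)\D^f(z)\eta=\sum_k c_kz^k$ with $c_k\in\bw^jV$, the displayed identity forces $c_k\w\zeta=0$ for all $k>n-j$ and all $\zeta\in\bw^{n-j}V$; non-degeneracy of the wedge pairing $\bw^jV\times\bw^{n-j}V\to\bw^nV$ then gives $c_k=0$ for $k>n-j$. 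For $j\geq 1$ this puts the coefficient of $z^n$ at zero, which is the claimed identity on the positive-degree part of $\bw V$; specialising to $j=1$ returns the classical Cayley--Hamilton theorem on $V$ itself.

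The step I expect to be the main obstacle is precisely this last conversion: turning the family of scalar relations indexed by the test vectors $\zeta$ into the vanishing of $e(z)\D^f(z)\eta$ as an element of $\bw^jV\llb z\rrb$, which rests on the non-degeneracy of the exterior pairing and on the clean two-sided evaluation of $\OvD^f(z)$ on $\bw^nV\llb z\rrb$. I would also flag that the degree bound degenerates exactly on the scalars $\bw^0V$, where $\OvD^f(z)\zeta$ can attain full degree $n$; this is the structural reason the Cayley--Hamilton phenomenon is confined to $\bigoplus_{j\geq 1}\bw^jV$, in harmony with the role of $\bw^nV$ in the very definition of the eigenvalues $e_i$.
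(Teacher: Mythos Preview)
Your argument is correct and mirrors the strategy the paper employs for its multivariate generalisation in Theorem~\ref{thm:thm47} (the present theorem is only cited from \cite{GatSch}, not proved here): integration by parts $\OvD^f(z)(\D^f(z)\eta\w\zeta)=\eta\w\OvD^f(z)\zeta$, the scalar action of $\OvD^f(z)$ on $\bw^nV$, and the degree bound coming from $\OvD^f(z)_{|_V}$ being linear in $z$ (the univariate instance of Proposition~\ref{prop3} and Corollary~\ref{cor1}). Your version is in fact slightly sharper than the argument in the proof of Theorem~\ref{thm:thm47}, since you treat every $\bw^jV$ with $j\geq 1$ at once via the non-degenerate pairing $\bw^jV\times\bw^{n-j}V\to\bw^nV$ rather than only $j=1$, and your closing remark about $\bw^0V$ is well taken: the left-hand side evaluated at $1\in\bw^0V$ equals $(-1)^ne_n$, so the stated identity should be read on $\bigoplus_{j\geq 1}\bw^jV$.
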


Equality \eqref{eq:gsch} above implies the classical theorem by Cayley and 
Hamilton, by restricting to $V$, i.e.
\[
0=(D_n^f-e_1D_{n-1}^f+\cdots+(-1)^ne_n{\mathbbm 1})_{|V}=f^n-e_1f^{n-1}+\cdots+(-1)e_nf=0.
\]
Indeed, one can easily check that the eigenvalues of $\ovD^f_{|\bw^nV}$ are precisely the coefficients of the characteristic polynomial of $f$.
\section{Multivariate HS--derivations on $\bw V$}\label{sec:sec3}
\claim{}  Let 
$ \bw V\llb\uz\rrb:=\bigwedge V\llb z_1 , \cdots , z_n \rrb $
be the algebra of formal power series in $n$ formal  variables
$\uz:=( z_1 , \cdots , z_n) $ with values in the exterior algebra of the $n$-dimensional $\KK$-vector space $V$. If $\m:=(i_1,\ldots,i_n)\in\Z^n$ is an $n$-tuple of non negative integers, we write $\uz^{\m}=z_1^{i_1}\cdots z_n^{i_n}$ and $|\m|=i_1+\cdots+i_n$, in such a way that $\OvD(\uz)=\sum_{\m\in \N^{\m}}(-1)^{|i|}\ovD_{\m}\cdot \uz^{\m}$.

\begin{defn} \label{def3:multv}
 An $n$--multivariate HS--derivation is an algebra homomorphism
\begin{equation*}
\D(\uz):\bigwedge V\longrightarrow \bigwedge V\llb z_1,\cdots, z_n \rrb
\end{equation*}
in the sense that for every
$u,v \in \bigwedge V$,
$ \D (\underline{z})(u \wedge v) = \D(\underline{z})u \wedge \D(\underline{z}) v $.
\end{defn}
Denote by $HS_n(\bigwedge V)$ the set of all $n$--multivariate HS--derivations on $ \bigwedge V $. Each element   $ \D(\underline{z})\in HS_n(\bigwedge V) $ can be written as 
\[
\D(\underline{z})= \sum_{\m} 
D_{\m} \uz^\m, \qquad D_{\m} \in \End_\KK(\bw V).
\] 
\claim{}
The  product 
\[
\D(\uz)\E(\uz)=\sum \D(\uz) E_{\m} \uz^\m,
\]
where
$$\D(\uz) E_{\m}=\sum\limits_{\underline{j}}
\big( D_{\underline{j}}
\circ E_{\m}\big) \uz^{\underline{j}},$$
makes $HS_n(\bigwedge V) $ into a multiplicative monoid. The identity is the identity map of $\bw V$ thought of as a constant HS-derivation.
Moreover,  if $\D(\uz)$ and $\E(\uz)$ are in $HS_n(\bigwedge V)$, then $\D(\uz)\E(\uz)$ is  an element of $HS_n(\bigwedge V)$, because  for all $ u,v \in \bigwedge V $:
\begin{equation}\label{for1}
\begin{split}
&\D(\uz)\E(\uz) (u\wedge v) 
= \D(\uz)
\sum\limits_{\underline{l}} \sum\limits_{\underline{i}+\underline{j}=\underline{l}}\bigg( E_{\m}u \wedge E_{\underline{j}}v \Big)\uz^{\underline{l}} \\[3pt]
&= \sum\limits_{\underline{l}} \sum\limits_{\underline{i}+\underline{j}=\underline{l}} \D(\uz)E_{\m}u\cdot\uz^\m \wedge \D(\uz) E_{\underline{j}}v\cdot\uz^{\underline{j}}  \\[5pt]
&= \D(\uz)\E(\uz)u\wedge \D(\uz)\E(\uz)v.
\end{split}
\end{equation}
The next lemma basically says that the coefficients of a multivariate  $HS$--derivation on $\bw V$ behave as partial derivatives in ordinary 
calculus.
\begin{lem}
Let
$ \D(\uz) = \sum_{\m} D_{\m}\uz^\m\in  End_\KK(\bigwedge V)\llb \uz \rrb $. 
Then the equalities below are equivalent.
\begin{enumerate}
\item \label{it1} $ \D(\uz) (u \wedge v) = \D(\uz) u \wedge \D(\uz) v$.
\smallskip
\item \label{it2} $ D_{\m} (u \wedge v)= 
\Big(\sum_{\underline{j}+\underline{l}=\m} D_{\underline{j}} u \wedge 
D_{\underline{l}} v \Big)\uz^\m$.
\end{enumerate}
\end{lem}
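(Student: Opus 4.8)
The plan is to establish the equivalence by expanding both sides of \eqref{it1} as formal power series in $\uz$ and comparing the coefficient of each monomial $\uz^\m$, exploiting the elementary fact that two elements of $\bw V\llb\uz\rrb$ are equal precisely when all their coefficients coincide. This reduces \eqref{it1} to the family of coefficientwise identities \eqref{it2} indexed by $\m\in\N^n$, in exact analogy with the univariate Leibniz rule \cite[Proposition 4.1.5]{SCHSD}.

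First I would rewrite the left-hand side of \eqref{it1}. Since $\D(\uz)=\sum_{\m}D_{\m}\uz^\m$ is $\KK$-linear,
\[
\D(\uz)(u\w v)=\sum_{\m}D_{\m}(u\w v)\,\uz^\m,
\]
so the coefficient of $\uz^\m$ on the left is simply $D_{\m}(u\w v)$. Next I would expand the right-hand side using the algebra structure of $\bw V\llb\uz\rrb$, in which the wedge of two series is obtained by wedging the coefficients and multiplying the formal monomials, the exponents adding. Thus
\[
\D(\uz)u\w\D(\uz)v=\Big(\sum_{\underline{j}}D_{\underline{j}}u\,\uz^{\underline{j}}\Big)\w\Big(\sum_{\underline{l}}D_{\underline{l}}v\,\uz^{\underline{l}}\Big)=\sum_{\underline{j},\underline{l}}\big(D_{\underline{j}}u\w D_{\underline{l}}v\big)\,\uz^{\underline{j}+\underline{l}}.
\]
Regrouping the double sum according to the total multi-index $\m=\underline{j}+\underline{l}$, the coefficient of $\uz^\m$ on the right equals $\sum_{\underline{j}+\underline{l}=\m}D_{\underline{j}}u\w D_{\underline{l}}v$. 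Equating this with the coefficient computed above yields, for every $\m$, precisely the $\uz^\m$-coefficient identity recorded in \eqref{it2}, which proves the implication \eqref{it1}$\Rightarrow$\eqref{it2}. Conversely, if \eqref{it2} holds for all $\m$, then reassembling the two series coefficient by coefficient recovers the displayed equality in \eqref{it1}.

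The computation is entirely formal, and I do not expect a genuine obstacle. The only point requiring care is the bookkeeping of the convolution $\m=\underline{j}+\underline{l}$ over multi-indices in $\N^n$, which is the direct multivariate analogue of the Cauchy product used in the one-variable setting. Once the wedge product on $\bw V\llb\uz\rrb$ is understood to act on monomials by addition of exponents, both implications follow at once from the uniqueness of power-series coefficients.
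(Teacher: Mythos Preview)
Your argument is correct and follows essentially the same route as the paper: expand both sides of \eqref{it1} as formal power series, identify the coefficient of $\uz^\m$ on the right via the Cauchy convolution $\m=\underline{j}+\underline{l}$, and invoke uniqueness of power-series coefficients to obtain both implications. The paper's proof is the same computation written out slightly more tersely.
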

\begin{proof}
	\eqref{it1} $ \Rightarrow $ \eqref{it2} is clear since \eqref{it1} means
	\[\D(\uz) (u \wedge v)= \sum_{\underline{j}} D_{\underline{j}}u\cdot \uz^{\underline{j}}
	\wedge
	\sum\limits_{\underline{l}} D_{\underline{l}}v\cdot\uz^{\underline{l}}.
	\]
	Thus, the coefficient of $\uz^\m$ in the right hand side  of (\ref{it1}) is precisely $\displaystyle{\sum_{\underline{j}+\underline{l}=\m}} D_{\underline{j}}(u) \wedge D_{\underline{l}} (v)$. Conversely, if  \eqref{it2} holds one has
	\begin{align*}
	\D(\uz) (u \wedge v)& =
	\sum\limits_{\m} D_{\m}(u\wedge v)\uz^\m
	= \sum\limits_{\m}
	\bigg(\sum\limits_{\underline{j}+\underline{l}=\m} D_{\underline{j}}u\wedge D_{\underline{l}}v\bigg)\uz^\m\\
	& =\sum\limits_{\m} \bigg(\sum\limits_{\underline{j}+\underline{l}=\m} D_{\underline{j}}u \cdot \uz^{\underline{j}}
	\wedge D_{\underline{l}} v \cdot \uz^{\underline{l}}\bigg)
	=\D(\uz)u \wedge \D(\uz)v. 
	\end{align*}
\end{proof}
\begin{exam}
	Let $(\bfe_1,\ldots, \bfe_n)$  be the canonical basis of ${\mathbb Z}^n$. 
It is easy to see that for every $0 \leq k \leq n$, the component
	$ D_{\bfe_k}:=D_{(0,\ldots,1,\ldots,0)}$ ($1$ as $k$-th  entry)
	is the coefficient of $z_k$. Each $D_{\bfe_k}$ is a usual derivation of the exterior algebra.
\end{exam} 

The following generalises \cite{GatSch} in case $n=1$. 

\begin{prop}\label{prop3:36} Let 
	$
	f(\underline{z})=\sum_{\m\in\N^n}f_{\m}\uz^\m: V \rightarrow  V\llb \underline{z} \rrb 
	$
	be an $\End_\KK(V)$-valued formal power series. Then there is a unique multivariate HS-derivation $\D^f(\uz)$ on $\bw V$ such that  
	$\D^f(\uz)|_{V}=f(\uz)$.
\end{prop}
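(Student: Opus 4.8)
The plan is to reproduce the univariate argument of \cite[Page 68]{HSDGA}, the only new ingredient being that the single indeterminate $z$ is replaced by the tuple $\uz$. Since the wedge product and the universal property of the exterior algebra are insensitive to the number of indeterminates carried along, this replacement should be essentially formal. I would treat uniqueness first. The exterior algebra $\bw V$ is generated as a $\KK$-algebra by $V=\bw^1V$ together with the scalars $\bw^0V=\KK$, so any algebra homomorphism $\D(\uz):\bw V\sra \bw V\llb\uz\rrb$ is completely determined by its restriction to $V$. Indeed the homomorphism property forces
$$
\D(\uz)(v_1\w\cdots\w v_j)=\D(\uz)v_1\w\cdots\w\D(\uz)v_j=f(\uz)v_1\w\cdots\w f(\uz)v_j
$$
on every decomposable element, and such elements span $\bw^jV$; hence at most one $\D^f(\uz)$ can satisfy $\D^f(\uz)|_V=f(\uz)$.

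For existence the cleanest route is the universal property of the exterior algebra. Viewing $\bw V\llb\uz\rrb$ as a $\KK$-algebra, consider the $\KK$-linear map $g:=f(\uz):V\sra V\llb\uz\rrb\subseteq \bw V\llb\uz\rrb$. By the universal property there is a unique $\KK$-algebra homomorphism $\D^f(\uz):\bw V\sra \bw V\llb\uz\rrb$ extending $g$, provided one verifies the single defining relation $g(v)\w g(v)=0$ for all $v\in V$. This is the one point deserving a computation: writing $f(\uz)v=\sum_{\m}(f_{\m}v)\uz^{\m}$, the coefficient of $\uz^{\underline{k}}$ in $f(\uz)v\w f(\uz)v$ is $\sum_{\m+\underline{j}=\underline{k}}(f_{\m}v)\w(f_{\underline{j}}v)$. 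Here the summands with $\m\neq\underline{j}$ cancel in anticommuting pairs, since $a\w b+b\w a=0$ for $a,b\in V$, while the summands with $\m=\underline{j}$ vanish because $w\w w=0$ for $w\in V$. Thus $g(v)\w g(v)=0$ in every characteristic, and the desired homomorphism $\D^f(\uz)$ exists.

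By construction $\D^f(\uz)$ is an algebra homomorphism $\bw V\sra \bw V\llb\uz\rrb$, i.e. a multivariate HS--derivation in the sense of Definition \ref{def3:multv}, and it restricts to $f(\uz)$ on $V$ by design. Alternatively, and more in the spirit of the preceding Lemma, one may define $\D^f(\uz)$ directly on decomposables by the displayed formula and check that it factors through the alternating relations of $\bw V$ (the same cancellation as above) and satisfies the coefficientwise Leibniz rule equivalent to the homomorphism property. The main obstacle, such as it is, is purely organisational: one must make sure the multi-index sums are well defined in the $\uz$-adic topology and that the pairwise cancellation is legitimate over $\KK\llb\uz\rrb$. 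Neither causes real difficulty, because everything is finitely supported in each total degree $|\m|$ and the coefficients $f_{\m}$ are ordinary $\KK$-linear endomorphisms.
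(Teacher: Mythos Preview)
Your proof is correct and follows essentially the same route as the paper: both invoke the universal property of the exterior algebra applied to the map $v\mapsto f(\uz)v$, with the key verification being that this map is alternating (equivalently, that $f(\uz)v\wedge f(\uz)v=0$), which you carry out explicitly while the paper merely asserts it. The only cosmetic difference is that the paper phrases the construction degree by degree via the universal property of each $\bw^kV$, whereas you use the universal property of $\bw V$ as a $\KK$-algebra in one stroke; your alternative paragraph in fact recovers the paper's formulation verbatim.
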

\begin{proof}
For all $k\geq 0$, define the multilinear map
\begin{equation}
\left\{\begin{matrix}
\underbrace{V\otimes \cdots\otimes V}_{k\,\,times}&\lra& \bw^kV\llb\uz\rrb\cr\cr
v_1\otimes\cdots\otimes v_k&\longmapsto &f(\uz)v_1\w \cdots\w f(\uz)v_k
\end{matrix}\right.\label{eq36:multi}
\end{equation}
Thus, besides being clearly $\KK$-multi-linear, the map \eqref{eq36:multi} is also alternating. The universal property of the exterior powers implies that it factorises uniquely through a map $\D^{k,f}(\uz):\bw^kV\sra \bw^kV\llb\uz\rrb$. This provides the map
$\D^f(\uz)\in HS_n(\bw V)$. Indeed, any $u\in \bw V$ is a finite linear combination of homogeneous decomposable elements of the form $u_1\w\cdots\w u_k$. It is then enough to define $\D^f(\uz)$ for all the elements of this form, what  we do as follows:
$$
\D(\uz)(v_1\w\cdots\w v_k)=f(\uz)v_1\w\cdots\w f(\uz)v_k.
$$
This proves the existence, while the uniqueness is a straightforward exercise.
\end{proof}

\claim{} If $ \D_{(0, \cdots 0)}^f$ is invertible, then $\D^f(\uz)$ is invertible in $\End_\KK(\bw V)\llb \uz\rrb$. We write the inverse in the form
$$ \OvD^f(\uz)=  \sum_{k\geq 0}\sum_{|\m|=k} (-1)^{k}\ovD_{\m}^f\uz^\m.
$$
Clearly
$\ovD_{(0, \ldots , 0)}^f= D_{(0, \ldots , 0)}^f $ 
and an easy check provides the equality
$\ovD^f_{\bfe_k}= D_{\bfe_k}^f$ holding for all $k\geq 0$. 
The following is just a rephrasing of \cite{HSDGA}.
\begin{prop}
	The inverse $\OvD(\uz)$ of the HS--derivation $\D(z)$ is a HS--derivation.
\end{prop}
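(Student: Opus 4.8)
The plan is to exploit the fact that $\D(\uz)$ is an algebra homomorphism, together with the injectivity coming from invertibility, rather than to argue coefficient by coefficient. First I would regard both $\D(\uz)$ and $\OvD(\uz)$ as $\KK\llb\uz\rrb$-linear endomorphisms of $\bw V\llb\uz\rrb$, extending them from $\bw V$ by treating the formal variables $\uz$ as central scalars. With this convention the monoid product introduced in Section~\ref{sec:sec3} is ordinary composition, so the defining relation $\OvD(\uz)\cdot\D(\uz)=1$ reads $\OvD(\uz)\circ\D(\uz)=\mathrm{id}$ on $\bw V\llb\uz\rrb$, and (since $D_{(0,\ldots,0)}$ is invertible) likewise $\D(\uz)\circ\OvD(\uz)=\mathrm{id}$. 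In particular $\D(\uz)$ is injective.

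Next I would check that the homomorphism identity $\D(\uz)(a\w b)=\D(\uz)a\w \D(\uz)b$, known on $\bw V$ by Definition~\ref{def3:multv}, persists for all $a,b\in\bw V\llb\uz\rrb$. Writing $a=\sum_\m a_\m\uz^\m$ and $b=\sum_{\underline{l}} b_{\underline{l}}\uz^{\underline{l}}$ and using $\KK\llb\uz\rrb$-linearity, both sides expand to $\sum_{\m,\underline{l}}(\D(\uz)a_\m\w \D(\uz)b_{\underline{l}})\uz^{\m+\underline{l}}$, so they coincide.

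The heart of the argument is then a single line. For $u,v\in\bw V$ we have, on one hand, $\D(\uz)\big(\OvD(\uz)(u\w v)\big)=u\w v$, and on the other hand, applying the extended homomorphism property of $\D(\uz)$ to the power series $\OvD(\uz)u,\OvD(\uz)v\in\bw V\llb\uz\rrb$,
\[
\D(\uz)\big(\OvD(\uz)u\w \OvD(\uz)v\big)=\D(\uz)\OvD(\uz)u\ \w\ \D(\uz)\OvD(\uz)v=u\w v.
\]
Since $\D(\uz)$ is injective, the two preimages must agree, giving $\OvD(\uz)(u\w v)=\OvD(\uz)u\w \OvD(\uz)v$, i.e. $\OvD(\uz)\in HS_n(\bw V)$.

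I do not expect a genuine obstacle here; the only point deserving care is the bookkeeping of the second paragraph, namely that the wedge-homomorphism property really does extend $\KK\llb\uz\rrb$-linearly to power-series arguments. This is precisely what licenses feeding the power series $\OvD(\uz)u$ and $\OvD(\uz)v$ into $\D(\uz)$ in the displayed computation. Once that extension is in place, the injectivity trick closes the proof at once, and for $n=1$ it specialises to the univariate inversion statement recalled after Definition~\ref{def3:multv}.
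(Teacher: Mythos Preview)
Your argument is correct and is essentially the paper's own proof: the paper writes $\OvD(\uz)(u\wedge v)=\OvD(\uz)\big(\D(\uz)\OvD(\uz)u\wedge\D(\uz)\OvD(\uz)v\big)=\OvD(\uz)u\wedge\OvD(\uz)v$, which is the same inverse-plus-homomorphism trick you use, only packaged as cancelling $\OvD(\uz)\D(\uz)$ rather than invoking injectivity of $\D(\uz)$. Your extra paragraph making the $\KK\llb\uz\rrb$-linear extension of the wedge-homomorphism identity explicit is exactly the content the paper hides behind its reference to equation~\eqref{for1}.
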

\begin{proof}
	Using 
	equation \eqref{for1},
	for every $u,v \in \bigwedge V$, we have
	$$
	\OvD(\uz)(u\wedge v) = \OvD(\uz)(\D(\uz)\OvD(\uz)u \wedge \D(\uz)\OvD(\uz) v)= \OvD(\uz) u \wedge \OvD(\uz)v.
	$$
	Therefore,
	$\OvD(\uz) \in HS_n(\bigwedge V)$.
\end{proof}
\begin{prop}[Integration by parts]\label{prop1}
	The following formulas hold for every $u,v \in \bigwedge V$:
	\begin{eqnarray}
	\D(\uz) u \wedge v &=&\D(\uz) (u \wedge \OvD(\uz) v), \label{for2.2}\cr \cr
	 \OvD(\uz)u\wedge v&=& \OvD(\uz)(u \wedge \D(\uz)v). \label{for2.3}
	\end{eqnarray}
\end{prop}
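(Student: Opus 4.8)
The plan is to deduce both identities directly from two ingredients already in hand: the multiplicativity of $\D(\uz)$ and of its inverse $\OvD(\uz)$ (the latter established in the preceding proposition), together with the fact that $\D(\uz)$ and $\OvD(\uz)$ are two-sided inverses of one another. Since the invertible HS--derivations form a group under composition, the one-sided relation $\OvD(\uz)\D(\uz)=1$ automatically upgrades to $\D(\uz)\OvD(\uz)=1$, so both $\D(\uz)\OvD(\uz)=1$ and $\OvD(\uz)\D(\uz)=1$ are available.

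For the first formula \eqref{for2.2}, I would start from the right-hand side and apply the defining property of the HS--derivation $\D(\uz)$ to the wedge product $u\wedge\OvD(\uz)v$:
$$
\D(\uz)\bigl(u\wedge\OvD(\uz)v\bigr)=\D(\uz)u\wedge\D(\uz)\OvD(\uz)v=\D(\uz)u\wedge v,
$$
where the last equality uses $\D(\uz)\OvD(\uz)=1$. This is exactly \eqref{for2.2}.

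For the second formula \eqref{for2.3} the argument is symmetric, but here it is essential to invoke the preceding proposition, which guarantees that $\OvD(\uz)$ is itself a HS--derivation. Applying its multiplicativity to $u\wedge\D(\uz)v$ gives
$$
\OvD(\uz)\bigl(u\wedge\D(\uz)v\bigr)=\OvD(\uz)u\wedge\OvD(\uz)\D(\uz)v=\OvD(\uz)u\wedge v,
$$
this time using $\OvD(\uz)\D(\uz)=1$, which is precisely \eqref{for2.3}.

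There is no serious obstacle in this argument; the only point worth flagging is that both identities genuinely rely on multiplicativity of the relevant operator on all of $\bw V$ (not merely on decomposable elements) and on the two-sidedness of the inverse. Both facts are secured by the earlier results, so the proof reduces to the two short verifications above.
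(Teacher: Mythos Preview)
Your proof is correct and is precisely the ``straightforward'' verification the paper alludes to: the paper gives no argument beyond pointing to the analogous univariate result in \cite[Proposition~4.1.9]{HSDGA}, and your two one-line computations using multiplicativity of $\D(\uz)$ and $\OvD(\uz)$ together with $\D(\uz)\OvD(\uz)=\OvD(\uz)\D(\uz)=1$ are exactly that argument.
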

\begin{proof}
	It is straightforward  and works exactly the same as in \cite[Proposition 4.1.9]{HSDGA}.
\end{proof}

\claim{} From the fact that each $ v \in \bigwedge^k V $ is a finite linear combination of elements of the form
$ v_{i_1} \wedge v_{i_2} \wedge \cdots \wedge v_{i_k}$, where $v_{i_j}\in V$,  and 
$$
\D(\underline{z})(v_{i_1} \wedge v_{i_2} \wedge \cdots \wedge v_{i_k})=\D(\underline{z})v_{i_1}\wedge\cdots\wedge \D(\underline{z})v_{i_k},
$$
it follows that $\D(\uz)$ is uniquely determined by its restriction 
$ \D(\uz)|_{V}\colon V \longrightarrow \bw V\llb \uz \rrb $
to the first degree of the exterior algebra. In this note we shall stick to the case in which $\D(\uz)$ is homogeneous of degree $0$ with respect to the exterior algebra graduation, i.e., each coefficient of $\D(\uz)$ maps $\bw^kV$ to itself. In this case $ \D(\uz)\vert_{V} \in End_\KK(V)\llb \uz \rrb $. In the sequel we shall construct a relevant homogeneous multivariate HS-derivation of degree $0$.
\section{Traces of $n$-tuples of endomorphisms and the main result}\label{sec4:main}
\claim{}\label{sec41} 
Let  $\uphi\coloneqq(\phi_1,\ldots, \phi_n)$  be an ordered $k$-tuple of endomorphisms  of  our fixed $n$-dimensional $\KK$-vector space  $V$. Because of Proposition \ref{prop3:36}, there exists a  unique multivariate HS--derivation 
$\OvD(\uz):=\sum_{\m\in \N^{\m}}(-1)^{|i|}\ovD_{\m}\cdot \uz^{\m}:\bw V\sra \bw V \llb\uz\rrb$ such that
\begin{equation}\label{eq:inverse}
\OvD(\uz)_{|V}=\mathbbm{1}-(\phi_1 z_1+\cdots +\phi_n z_n).
\end{equation}

\begin{prop}\label{prop3} For all $u\in\bw^kV$, $\OvD(\uz)u\in \bw V[ \uz]$.  More precisely, it is  a polynomial in the indeterminates $(z_1,\cdots,z_n)$ of degree at most $k$.
\end{prop}
\begin{proof}
We use induction on the degree $k$ of the exterior algebra.
For $k=1$, we have
\[\OvD(\uz)u=u-\phi_1(u)z_1-\cdots-\phi_n(u)z_n,\]
which is a polynomial of degree one in the indeterminates $z_1,\ldots,z_n$. Suppose the property  holds true for all $1\leq l<k$ and $u \in \bigwedge^k V$. 
Without loss of generality, one may assume $u$  homogeneous of the form $u_1 \wedge u_2$, where $u_1\in \bigwedge^1V=V$ and $u_2\in\bigwedge^{k-1}V$. Thus
\[\OvD(\uz)u=\OvD(\uz)(u_1 \wedge u_2)=
\OvD(\uz)u_1 \wedge \OvD(\uz)u_2.
\]
Since  $\OvD(\uz)u_1$ is a polynomial of degree one and,  by the inductive hypothesis, $\OvD(\uz)u_2$ is a polynomial of degree at most $k-1$, it follows that $\OvD(\uz)u$ is a polynomial of degree at most $k$.
\end{proof}
\begin{cor}\label{cor1}
For all $\m\in \mathbb{N}^n$ such that $|\m| >j$, 
${\ovD_{\m}}_{|\bigwedge^jV}=0$.
In other words 
\begin{equation*}
|\m|>j\quad\Rightarrow\quad \bigwedge ^jV \subset \text{ker}(\ovD_{\m}).
\end{equation*}
\end{cor}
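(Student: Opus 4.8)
The plan is to derive Corollary~\ref{cor1} as an immediate consequence of Proposition~\ref{prop3}. The proposition tells us that for any $u\in\bw^jV$, the element $\OvD(\uz)u$ is a polynomial in the indeterminates $z_1,\ldots,z_n$ of degree at most $j$. The corollary is simply the coefficient-wise reformulation of this degree bound, so the entire task is to unpack the definition of the coefficients $\ovD_{\m}$ and match them against the degree statement.

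First I would write out the defining expansion
\[
\OvD(\uz)u=\sum_{\m\in\N^n}(-1)^{|\m|}\ovD_{\m}(u)\,\uz^{\m},
\]
valid for every $u\in\bw V$. Restricting attention to $u\in\bw^jV$, Proposition~\ref{prop3} asserts that this sum is actually a polynomial of degree at most $j$, meaning that the total degree $|\m|$ of every monomial $\uz^{\m}$ appearing with a nonzero coefficient satisfies $|\m|\le j$. Equivalently, whenever $|\m|>j$ the corresponding coefficient $(-1)^{|\m|}\ovD_{\m}(u)$ must vanish, and since the sign $(-1)^{|\m|}$ is a nonzero scalar this forces $\ovD_{\m}(u)=0$.

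The key step is therefore just the observation that the monomials $\uz^{\m}$, as $\m$ ranges over $\N^n$, are $\KK$-linearly independent in $\bw V\llb\uz\rrb$, so that a formal power series is a polynomial of degree at most $j$ precisely when all its coefficients indexed by $\m$ with $|\m|>j$ are zero. Applying this to each fixed $u\in\bw^jV$ gives $\ovD_{\m}(u)=0$ for all such $u$, which is exactly the statement that $\bw^jV\subset\ker(\ovD_{\m})$, i.e.\ ${\ovD_{\m}}_{|\bw^jV}=0$.

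I do not anticipate any genuine obstacle here, since the corollary contains no content beyond a transcription of Proposition~\ref{prop3}; the only point requiring a word of care is the passage from ``$\OvD(\uz)u$ has degree at most $j$ for each individual $u$'' to the operator-level conclusion ``$\ovD_{\m}$ kills all of $\bw^jV$''. This is handled by noting that the bound in the proposition holds uniformly for every $u\in\bw^jV$ (the induction there was carried out for an arbitrary decomposable element and extends by linearity), so the vanishing of the $\uz^{\m}$-coefficient holds for each $u$ separately and hence $\ovD_{\m}$ annihilates the whole subspace $\bw^jV$.
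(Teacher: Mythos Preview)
Your proof is correct and follows essentially the same approach as the paper: both derive the corollary directly from Proposition~\ref{prop3} by observing that $\ovD_{\m}u$ is (up to sign) the coefficient of $\uz^{\m}$ in $\OvD(\uz)u$, which must vanish when $|\m|>j$ since that series is a polynomial of degree at most $j$. Your version is simply more explicit about the sign factor and the passage from pointwise vanishing to the operator statement.
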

\begin{proof}
	Let $u\in\bigwedge^j V$. Then we know $\ovD_{\m}u$ is the coefficient of 
	$\uz^\m$
	in $\OvD(\uz)u$. Now if $|\m| >j$, by Proposition \eqref{prop3} the coefficient of $\uz^\m$ in $\OvD(\uz)u$ is zero.
\end{proof}
\begin{defn}\label{def:trace1}
The $\m$-trace of an $n$-tuple of endomorphisms $\uphi$ on $V$ is  the scalar  $\tau_{\m}(\uphi)$ defined through the equality
\begin{equation}\label{key}
\tau_{\m}(\uphi)\cdot \xi=\ovD_{\m}\xi,
\end{equation} 
for each $\xi\neq 0\in \bw^nV$.
\end{defn}
In other words, $\bw^nV$ is an eigenspace of each $\ovD_{\m}$ with respect to the eigenvalue $\tau_{\m}(\uphi)\in\KK$.
\begin{exam}\label{ex:ex4.5}
Let $V$ be a vector space of dimension $2$ and $\uphi\coloneqq(\phi_1,\phi_2)$ be a pair of endomorphisms of $V$.
Let $\OvD(\uz)$ be the unique multivariate HS--derivation on $\bw V $ such that 
$\OvD(z_1,z_2)_{|V}=\mathbbm{1}-\phi_1z_1-\phi_2z_2$. It is invertible. Let
${\mathcal D}(\uz)=\sum_{\m\in\N^2}D_\m\uz^\m$
 be its inverse. Then we have
\begin{equation*}
\begin{split}
\OvD(z_1,z_2)(u,v)&=\OvD(z_1,z_2)u\w\OvD(z_1,z_2)v\\[5pt]
& = (u-\phi_1uz_1-\phi_2uz_2)\w(v-\phi_1vz_1-\phi_2vz_2)\\[5pt]
& = u\w v-(\phi_1u\w v+u\w \phi_1v)z_1 -(\phi_2u\w v+u\w \phi_2v)z_2\\[5pt]
& +( \phi_1u\w \phi_1v) z_1^2+(\phi_2u\w \phi_2v) z_2^2 + (\phi_1u\w \phi_2 v+\phi_2 u\w \phi_1 v)z_1z_1 +\cdots
\end{split}
\end{equation*}
	The integration by parts formula \eqref{for2.2} states that
	\[
	\OvD(z_1,z_2)(\D(z_1,z_2)u\w v)= u\w \OvD(z_1,z_2)v.
	\]
	The coefficients of $z_1z_2$ in the right hand side is $u\w \ovD_{(1,1)}v$, which is zero because of Proposition~\ref{prop3}. Thus the coefficient of $z_1z_2$ of the left hand side is zero, too. Explicitly:
	\begin{equation*}
	D_{(1,1)}u\w v-\ovD_{(0,1)}(D_{(1,0)}u\w v)-\ovD_{(1,0)}(\D_{(0,1)}u\w v)+\ovD_{(1,1)}(u\w v)=0,
	\end{equation*}
	which can be written
	\begin{equation*}
	D_{(1,1)}u\w v-\tau_{(0,1)}(\uphi)(D_{(1,0)}u\w v)-\tau_{(1,0)}(\uphi)(\D_{(0,1)}u\w v)+\tau_{(1,1)}(\uphi)(u\w v)=0.
	\end{equation*}
	Since $\tau_{\m}(\uphi)\in \KK$ and because of the multilinearity of the wedge product, we get
	\begin{equation*}
	\left(D_{(1,1)}u - \tau_{(0,1)}(\uphi)D_{(1,0)}u-\tau_{(1,0)}(\uphi)D_{(0,1)}u+\tau_{(1,1)}(\uphi)u\right)\w v=0.
	\end{equation*}
	i.e.
		\begin{equation*}
	D_{(1,1)}u - \tau_{(0,1)}(\uphi)D_{(1,0)}u-\tau_{(1,0)}(\uphi)D_{(0,1)}u+\tau_{(1,1)}(\uphi)u=0,
	\end{equation*}
due to the arbitrary choice of $v\in V$.

From the fact that $\D(\uz)$ and $\OvD(\uz)$ are mutually inverses in $\bw V\llb z_1,z_2\rrb$,  a simple computation, keeping into account the initial conditions, yields:
	$$D_{(1,0)}=\phi_1 , D_{(0,1)}=\phi_2 , D_{(1,1)}=\phi_1\circ\phi_2 +\phi_2\circ\phi_1$$
	Therefore, we have 
	\begin{equation}\label{eq:ch2}
	\phi_1\circ\phi_2 +\phi_2\circ\phi_1 +
	\tau_{(0,1)}(\uphi)\phi_1-\tau_{(1,0)}(\uphi)\phi_2+\tau_{(1,1)}(\uphi)\mathbbm{1}_{2\times 2}=0.
	\end{equation} 
Let us see the explicit expression $\tau_{(i,j)}(\uphi)$ in terms of  the pair $\uphi$. We know that
	\begin{equation*}
	\ovD_{(1,0)}(u\w v)=\ovD_{(1,0)}u\w v+u\w \ovD_{(1,0)}v =\phi_1 u\w v+u\w \phi_1v.
	\end{equation*}
i.e.
$
\tau_{(1,0)}(\uphi)(u\w v)=\tr(\phi_1)(u\w v)
$, according to the usual notion of trace of an endomorphism. Similarly
\[
\tau_{(0,1)}(\uphi)(u\w v)=\tr(\phi_2)(u\w v).
\]
It is also easily seen that
\[
\ovD_{(1,1)}(u\w v)=\ovD_{(1,0)}u\w\ovD_{(0,1)}v+\ovD_{(0,1)}u\w\ovD_{(1,0)}v=\phi_1 u\w \phi_2 v+ \phi_2 u\w \phi_1 v,
\]
from which
\[\tau_{(1,1)}(\uphi)(u\w v)=\phi_1(u)\w \phi_2(v)+ \phi_2(u)\w \phi_1(v).\] 
\end{exam}
\begin{exam} Let $\a=(A_1,A_2)$  be  matrices of the pair $\uphi:=(\phi_1,\phi_2)$ of Example \ref{ex:ex4.5}, with respect to some arbitrary basis $ (u,v) $ of $V$.
If  $A_1$ and $A_2$ are given by:
\[ A_1=\begin{bmatrix}
a & b\\ c &d
\end{bmatrix} \qquad \mathrm{and}\qquad  A_2=\begin{bmatrix}
\alpha & \beta \\ \gamma & \delta
\end{bmatrix}, \]
then
\begin{equation}\label{det1}
\begin{split}
\tau_{(1,1)}(\uphi)(u\w v)&=\left[\det(\phi_1 u,\phi_2 v)+\det(\phi_1 u,\phi_2 v)\right](u\w v)\\[5pt]
&= \left(\det \begin{bmatrix}
a & \beta \\ b & \delta
\end{bmatrix}+
\det \begin{bmatrix}
\alpha & b \\ \gamma & d
\end{bmatrix}\right)(u\w v).
\end{split}
\end{equation}
Also we have
\[\tau_{(1,0)}(\uphi)(u\w v)=\tr(A_1)(u\w v),\quad \tau_{(0,1)}(\uphi)(u\w v)=\tr(A_2)(u\w v).\]
In case $\phi_1=\phi_2$, and $A$ is its matrix with respect to the basis $(u,v)$ of $V$, formula \eqref{det1} shows that 
\[\tau_{(1,1)}(\uphi)=2\det(A).\]
So in this case, the formula \eqref{eq:ch2} becomes
\[2\left(A^2-\tr(A)A+\det(A)\mathbbm{1}_{2\times 2}\right)=0,\]
which is the classical Cayley-Hamilton theorem.
\end{exam}
\begin{rem}
Let $\uphi=(\phi_1,\cdots,\phi_k)\in \End_\KK(V)^n$ as at the beginning of Section~\ref{sec41}. Let $(u_1,\cdots, u_n)$ be a basis of $V$. 
It easily follows from Definition \ref{def:trace1} that 
\begin{equation}
\begin{split}
&\tau_{\bfe_{i_1}+\cdots+\bfe_{i_k}}(\uphi)(u_1\w\cdots \w u_n)=\\
&
\sum\limits_{\sigma\in S_n}(-1)^{|\sigma|}
\det\left(\phi_{i_1}u_{\sigma(1)},\cdots,\phi_{i_k}u_{\sigma(k)},\cdots,u_{\sigma(n)}\right)(u_1\w \cdots\w u_n).
\end{split}
\end{equation}
In particular, if $ A_i $ is the  matrix of $ \phi_i$ with respect to the chosen basis, and if $A=A_1=\cdots=A_n$, then $\tau_{(1,1,\cdots,1)}(\uphi)=n!\det(A)$.
\end{rem}

\medskip
\noindent We pass now to state and prove the main result of this note.
\begin{thm}\label{thm:thm47}
Let $V$ and $\uphi=(\phi_1,\cdots,\phi_k)$ as in \ref{sec41}. Then the following identity holds
\begin{equation}\label{eq:formnthm}
\sum_{k=0}^n(-1)^k\dfrac{1}{k!}\sum_{\sigma\in S_n}\tau_{\bfe_{\sigma(1)}+\cdots+\bfe_{\sigma(k)}}(\uphi)\cdot (\phi_{\sigma(k+1)}\circ \ldots  \circ \phi_{\sigma(n)})=0,
\end{equation}
where by convention we set to  $\sum_{\sigma\in S_n}\phi_{\sigma(1)}\circ\cdots \circ\phi_{\sigma(n)}$  and 
$\tau_{\bfe_{\sigma(1)}+\cdots+\bfe_{\sigma(n)}}(\uphi)$
the summands corresponding to $k=0$ and $k=n$ respectively.
\end{thm}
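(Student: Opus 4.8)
The plan is to mimic the two--dimensional computation of Example~\ref{ex:ex4.5}, but to arrange the wedge factors so that every coefficient operator $\ovD_{\underline{j}}$ that shows up acts on a \emph{top--degree} element of $\bw^nV$, where by Definition~\ref{def:trace1} it is simply multiplication by the scalar $\tau_{\underline{j}}(\uphi)$. To this end I fix an arbitrary $u\in V$ and an arbitrary $v\in\bw^{n-1}V$, so that $D_{\m}u\in V$ for every $\m$ and $D_{\m}u\w v\in\bw^nV$. Starting from $\OvD(\uz)\D(\uz)=\mathbbm{1}$ together with the Leibniz rule for the HS--derivation $\OvD(\uz)$ (equivalently, the integration by parts formula of Proposition~\ref{prop1}), I get
$$
\OvD(\uz)\big(\D(\uz)u\w v\big)=\OvD(\uz)\D(\uz)u\w\OvD(\uz)v=u\w\OvD(\uz)v .
$$
The whole argument then reduces to comparing the coefficients of the squarefree top monomial $\uz^{\bfe_1+\cdots+\bfe_n}=z_1\cdots z_n$ on the two sides.

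The right--hand side is harmless: since $v\in\bw^{n-1}V$, Proposition~\ref{prop3} forces $\OvD(\uz)v$ to be a polynomial of degree at most $n-1$, so its coefficient of $z_1\cdots z_n$ vanishes, whence so does that of $u\w\OvD(\uz)v$. For the left--hand side I expand $\D(\uz)u=\sum_{\m}D_{\m}u\,\uz^{\m}$ and use that each $D_{\m}u\w v$ lies in $\bw^nV$, so that $\OvD(\uz)(D_{\m}u\w v)=\big(\sum_{\underline{j}}(-1)^{|\underline{j}|}\tau_{\underline{j}}(\uphi)\uz^{\underline{j}}\big)(D_{\m}u\w v)$. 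Collecting the coefficient of $z_1\cdots z_n$ forces $\m+\underline{j}=\bfe_1+\cdots+\bfe_n$, i.e. $\m$ and $\underline{j}$ are supported on complementary subsets $S$ and $S^{c}$ of $\{1,\ldots,n\}$; this gives
$$
\sum_{S\subseteq\{1,\ldots,n\}}(-1)^{\,n-|S|}\,\tau_{\sum_{i\notin S}\bfe_i}(\uphi)\,\big(D_{\sum_{i\in S}\bfe_i}u\w v\big)=0 .
$$
Because each $\tau$ is a scalar, I factor out $\w v$; letting $v$ range over $\bw^{n-1}V$ (and using that $w\w v=0$ for all such $v$ forces $w=0$ when $w\in V$) yields the operator identity $\sum_{S}(-1)^{n-|S|}\tau_{\sum_{i\notin S}\bfe_i}(\uphi)\,D_{\sum_{i\in S}\bfe_i}\big|_{V}=0$ on $V$.

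It then remains to translate this subset--indexed identity into the symmetric--group form of \eqref{eq:formnthm}. Restricting $\D(\uz)$ to $V$ and using \eqref{eq:inverse}, $\D(\uz)|_V$ is the inverse of $\mathbbm{1}-(\phi_1z_1+\cdots+\phi_nz_n)$ in $\End_\KK(V)\llb\uz\rrb$, namely $\sum_{m\geq0}(\phi_1z_1+\cdots+\phi_nz_n)^m$; reading off the coefficient of the squarefree monomial $\uz^{\sum_{i\in S}\bfe_i}$ shows that $D_{\sum_{i\in S}\bfe_i}|_V=\sum\phi_{i_1}\circ\cdots\circ\phi_{i_{|S|}}$, the sum being over all orderings $(i_1,\ldots,i_{|S|})$ of $S$. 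Setting $k=n-|S|$ and $T=S^{c}$, I reorganize the sum over permutations $\sigma\in S_n$ according to the \emph{unordered} set $T=\{\sigma(1),\ldots,\sigma(k)\}$ and the \emph{ordered} complement $(\sigma(k+1),\ldots,\sigma(n))$: the trace $\tau_{\bfe_{\sigma(1)}+\cdots+\bfe_{\sigma(k)}}(\uphi)$ depends only on the set $T$, so each set is counted $k!$ times, exactly cancelling the factor $1/k!$, while the ordered complement runs over all orderings of $S$. This matches \eqref{eq:formnthm} term by term (the conventions for $k=0$ and $k=n$ corresponding to $S=\{1,\ldots,n\}$ and $S=\emptyset$).

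The genuinely substantive step is the first one: the choice $u\in V$, $v\in\bw^{n-1}V$ is what keeps every argument of $\ovD_{\underline{j}}$ in top degree, thereby replacing iterated Leibniz expansions---which would otherwise involve the unknown action of the $\ovD$'s on intermediate exterior powers---by plain scalar multiplication by traces. The remaining difficulty is purely combinatorial, concentrated in the passage from the subset sum to the permutation sum; the appearance of $1/k!$ is entirely accounted for by the $k!$ orderings of the trace index set that leave $\tau_{\bfe_{\sigma(1)}+\cdots+\bfe_{\sigma(k)}}(\uphi)$ unchanged.
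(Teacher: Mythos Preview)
Your proof is correct and follows essentially the same route as the paper's: integration by parts applied to $\D(\uz)u\w v$ with $u\in V$ and $v\in\bw^{n-1}V$, vanishing of the $z_1\cdots z_n$--coefficient on the right via Proposition~\ref{prop3}/Corollary~\ref{cor1}, replacement of each $\ovD_{\underline{j}}$ by the scalar $\tau_{\underline{j}}(\uphi)$ on $\bw^nV$, identification of $D_{\sum_{i\in S}\bfe_i}|_V$ from the geometric series $\sum_{m\ge 0}(\phi_1z_1+\cdots+\phi_nz_n)^m$, and the $k!$ bookkeeping. Your intermediate subset--indexed formulation makes the combinatorics slightly more transparent than the paper's direct permutation bookkeeping, but the argument is the same.
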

\begin{proof}
Let 
$\OvD(\uz)\colon\bigwedge V \rightarrow \bigwedge V\llb \uz \rrb $
be the unique multivariate  HS--derivation such that 
\[\OvD(\uz)_{|V} =  \mathbbm{1} - \phi_1z_1 - \phi_2z_2 - \cdots - \phi_nz_n.\]
Let $\D(\uz)$ be the inverse of $\OvD(\uz)$ in $\bw V\llb \uz \rrb$, whose restriction  to $V$ clearly is
\begin{equation}\label{eq4:14}
\begin{split}
\D(z_1,z_2,\ldots , z_n)_{|V}&=
\frac{1}{\mathbbm{1} -\phi_1 z_1 - \phi_2 z_2 - \cdots - \phi_n z_n}\\[5pt]
& = \mathbbm{1} + \phi_1 z_1 + \cdots + \phi_n z_n + ( \phi_1 z_1 + \cdots + \phi_n z_n)^2 + \cdots\\[5pt]
&=\mathbbm{1}+\sum_{i\geq 1} ( \phi_1 z_1 + \cdots + \phi_n z_n)^i.
\end{split}
\end{equation}
Let  $u\in V$ be an arbitrary non null vector. Integration by parts  \eqref{for2.3} gives 
\begin{equation}
\OvD(z_1,\ldots, z_n)(\D(z_1,\ldots,z_n)u\w v)=u\w \OvD(z_1,\ldots,z_n)v.\label{eq3:fnl}
\end{equation}
Corollary \ref{cor1} implies that the coefficients of $z_1z_2\cdots z_n$ of the second member must be zero, i.e., explicitly
\begin{equation}
\OvD(z_1,\ldots, z_n)(\D(z_1,\ldots,z_n)u\w v)=0.\label{eq4:15}
\end{equation}	
Now  the coefficients of the monomial $z_1z_2\cdots z_n$ occurring  on the left hand side of \eqref{eq3:fnl}  are all of the form
\begin{equation}
(-1)^k\ovD_{\bfe_{i_1}+\cdots+\bfe_{i_k}}(D_{\bfe_{i_{k+1}}+\cdots+\bfe_{i_n}}u\w v),\label{eq:red}
\end{equation}
where $(i_1,\ldots,i_k,i_{k+1},\ldots,i_n)$ is any b\textit{•}jection of the set $\{1,2,\ldots,n\}$ such that $i_{k+1}<\cdots<i_n$. 
Fixing $k\geq 1$, the sum of all terms like \eqref{eq:red} occurring in \eqref{eq4:15} could be uniformly written as
\begin{equation}
\sum_{k=1}^n(-1)^k\ovD_{\bfe_{\sigma(1)}+\cdots+
\bfe_{\sigma(k)}}(D_{ \bfe_{i_{\sigma(k+1)}}+\cdots+\bfe_{i_{\sigma(k)}}}u
\w v),\label{eq:above}
\end{equation}
with the effect that any expression \eqref{eq:red} occurring only once in \eqref{eq4:15}, occurs with multiplicity $k!$ in \eqref{eq:above}. So we have to divide by $k!$ factorial to avoid redundance. It follows that \eqref{eq4:15} implies

\begin{equation}\label{eq:eq311}
\sum_{\sigma}\sum_{k=1}^n{(-1)^k\over k!}\ovD_{\bfe_{\sigma(1)}+\cdots+
\bfe_{\sigma(k)}}(D_{ \bfe_{i_{\sigma(k+1)}}+\cdots+\bfe_{i_{\sigma(k)}}}u
\w v)=0,
\end{equation}
where the first sum is over all $\sigma \in S_n$  such that $\sigma(k+1)<
\cdots<\sigma(n)$, whose cardinality is precisely $k!$.
	
Since  $D_{\bfe_{\sigma(k+1)}+\cdots+\bfe_{\sigma(n)}}u\w v\in \bw^nV$, it is an eigenvector of 
$\ovD_{\bfe_{\sigma(1)}+\cdots+\bfe_{\sigma(k)}}$
with respect to the eigenvalue $\tau_{\bfe_{\sigma(1)}+\cdots+\bfe_{\sigma(k)}}(\uphi)\in\KK$.
On the other hand
$D_{\bfe_{\sigma(k+1)}+\cdots+\bfe_{\sigma(n)}}$
is the coefficient of $z_{\sigma(k+1)}\cdots z_{\sigma(n)}$ in $(\phi_1z_1+
\cdots+\phi_nz_n)^{n-k}$, which is precisely (keeping into account that the 
composition of endomorphisms is not commutative)
\[\sum_{\gamma\in S_{n-k}}\phi_{\gamma(\sigma(k+1))}\cdot\ldots\cdot \phi_{\gamma(\sigma(n))},\]
where we are thinking $S_{n-k}$ precisely as the group of bijections of the set\linebreak  
$\{\sigma(k+1),\ldots,\sigma(n)\}$.
Therefore, formula \eqref{eq:eq311} can be rewritten as
\begin{equation}\label{eq:eq312}
\begin{split}
0&=\sum_{k=0}^n{(-1)^k\over k!}\sum_{\sigma\in S_n}\tau_{\bfe_{\sigma(1)}+\cdots+\bfe_{\sigma(k)}}(\uphi)
\cdot (\phi_{\sigma(k+1)}\circ \cdots  \circ\phi_{\sigma(n)}u\w v)\\
&=\left[\sum_{k=0}^n(-1)^k\dfrac{1}{k!}
\sum_{\sigma\in S_n}\tau_{\bfe_{\sigma(1)}+\cdots+\bfe_{\sigma(k)}}(\uphi)
\cdot \phi_{\sigma(k+1)}\circ \cdots \circ \phi_{\sigma(n)}u\right]\w v,
\end{split}
\end{equation}
using the fact that $\tau_{\bfe_{\sigma(1)},\cdots,\bfe_{\sigma(k)}}(\uphi)\in \KK$ and the multilinearity of the wedge product.
Since \eqref{eq:eq312} holds for each choice of $v\in \bw^{n-1}V$, formula \eqref{eq:formnthm} follows.
\end{proof}
\begin{cor}
For all ordered $n$-tuples of  $\KK$-valued $n\times n$ matrices $\a=(A_1,\ldots, A_n)$ the following identity holds:
\[	\sum_{k=0}^n(-1)^k\dfrac{1}{k!}\sum_{\sigma\in S_n}\tau_{\bfe_{\sigma(1)}+\cdots+\bfe_{\sigma(k)}}(\a)\cdot (A_{\sigma(k+1)} \cdots A_{\sigma(n)})=0,
\]
where $\tau_{\m}(\a)$ denotes the $\m$-trace of the $n$-tuples of matrices thought of as endomorphisms of $\KK^n$.
\end{cor}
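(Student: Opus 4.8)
The plan is to deduce the matrix identity directly from Theorem~\ref{thm:thm47} by transporting it along the canonical algebra isomorphism between endomorphisms and matrices. First I would fix the canonical basis of $\KK^n$ and let $\uphi=(\phi_1,\ldots,\phi_n)\in\End_\KK(\KK^n)^n$ be the $n$-tuple of endomorphisms whose matrices with respect to this basis are exactly $A_1,\ldots,A_n$. By the very definition of the $\m$-trace of a tuple of matrices recalled in the statement, one has $\tau_\m(\a)=\tau_\m(\uphi)$ for every $\m\in\N^n$, so the scalar coefficients appearing in the two identities coincide.

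Next I would invoke the matrix representation map $\rho\colon\End_\KK(\KK^n)\to\KK^{n\times n}$ attached to the chosen basis. The key observation is that $\rho$ is an isomorphism of $\KK$-algebras: it is $\KK$-linear, it sends the identity endomorphism to $\mathbbm{1}_{n\times n}$, and, with the usual convention $\phi(x)=Ax$ on column vectors, it carries composition to matrix product in the same order, i.e.\ $\rho(\phi\circ\psi)=\rho(\phi)\rho(\psi)$. In particular $\rho(\phi_i)=A_i$ and
\[
\rho\bigl(\phi_{\sigma(k+1)}\circ\cdots\circ\phi_{\sigma(n)}\bigr)=A_{\sigma(k+1)}\cdots A_{\sigma(n)}
\]
for every $\sigma\in S_n$ and every $0\le k\le n$, so that the order-sensitive products on the two sides match term by term.

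Then I would simply apply $\rho$ to the identity \eqref{eq:formnthm} of Theorem~\ref{thm:thm47} instantiated at $\uphi$. Being $\KK$-linear, $\rho$ commutes with multiplication by the scalars $\tau_{\bfe_{\sigma(1)}+\cdots+\bfe_{\sigma(k)}}(\uphi)$ and with the finite double sum over $\sigma\in S_n$ and $0\le k\le n$; being an algebra homomorphism, it converts each composite into the corresponding matrix product. Since the left-hand side of \eqref{eq:formnthm} is the zero endomorphism, its image under $\rho$ is the zero matrix, and that image is exactly the asserted sum; replacing $\tau_\m(\uphi)$ by $\tau_\m(\a)$ via the equality of the first paragraph yields the Corollary verbatim.

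I do not expect a genuine obstacle here, since the argument is pure transport of structure; the only point requiring care is the bookkeeping of composition order, namely checking that the algebra isomorphism preserves the order of factors, so that $\phi_{\sigma(k+1)}\circ\cdots\circ\phi_{\sigma(n)}$ maps to $A_{\sigma(k+1)}\cdots A_{\sigma(n)}$ and not to the reversed product. It is moreover worth recalling, as observed after the Main Theorem, that the scalars $\tau_\m$ are conjugacy-class invariants; this guarantees that $\tau_\m(\a)$ is well defined independently of the basis used to realise the $A_i$ as endomorphisms, so that the statement is genuinely basis-free despite being phrased through a fixed matrix representation.
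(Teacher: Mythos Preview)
Your proposal is correct and follows exactly the approach implicit in the paper: the Corollary is stated there without proof, as an immediate specialisation of Theorem~\ref{thm:thm47} to $V=\KK^n$ via the canonical identification of endomorphisms with matrices. Your write-up simply makes explicit the transport along the algebra isomorphism $\End_\KK(\KK^n)\cong\KK^{n\times n}$, which is precisely what the paper leaves to the reader.
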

\section{Applications and Examples}\label{sec:sec5}
\begin{exam}\label{extraces}
Let us consider the  ordered triple $\a:=(A,B,C)$, where    $A=(a_{ij})$, $B=(b_{ij})$, $C=(c_{ij})$ are  $3\times 3$ square matrices  thought of as endomorphisms of 
$\KK^3$.
Let  $\OvD(\uz):=\OvD_{\a}(z_1,z_2,z_3):\bw\KK^3\sra(\bw\KK^3)[z_1,z_2,z_3]$ be the unique multivariate HS-derivation on $\bw\KK^3$ such that
\[\OvD(\uz)_{|V}= \mathbbm{1}-Az_1-Bz_2-Cz_3.\]
If  $1:=\bfe_1\w\bfe_2\w\bfe_3$, is the canonical basis of $\bw^3\KK^3$,
one has
\begin{equation}
\setlength\arraycolsep{1pt}
\begin{array}{cll}
&&\OvD(\uz)(\bfe_1\w\bfe_2\w\bfe_3)=\displaystyle{\sum}_{\m\in\N^3}\ovD_{\m}(\bfe_1\w\bfe_2\w\bfe_3) z^\m=\OvD(\uz)\bfe_1\w\OvD(\uz)\bfe_2\w \OvD(\uz)\bfe_3\cr\cr
&=&(\bfe_1-A\bfe_1z_1-B\bfe_1z_2-C\bfe_1z_3)\w(\bfe_2-A\bfe_2z_1-B\bfe_2z_2-C\bfe_2z_3)\cr\cr
&&\w(\bfe_3-A\bfe_3\cdot z_1-B\bfe_3\cdot z_2-C\bfe_3\cdot z_3).\label{eq5:17}
\end{array}
\end{equation}
Then $\ovD_{\m}(\bfe_1\w\bfe_2\w\bfe_3)$ is the coefficient of $\uz^\m$ in the right hand side of the expression~\eqref{eq5:17}. For example
$\ovD_{(2,1,0)}(\bfe_1\w\bfe_2\w\bfe_3)$
is the coefficient of $z_1^2z_2$, i.e., precisely:
\[
\ovD_{(2,1,0)}(\bfe_1\w\bfe_2\w\bfe_3)=
A\bfe_1\w B\bfe_2\w A\bfe_3+A\bfe_1\w A\bfe_2\w B\bfe_3+B\bfe_1\w A\bfe_2\w A\bfe_3.
\]
In particular, using the definition of determinant of a matrix, one has
\[
\tau_{(2,1,0)}(\Acal)=\det(B\bfe_1, A\bfe_2, A\bfe_3)+\det(A\bfe_1, A\bfe_2,B\bfe_3)
+\det(A\bfe_1, B\bfe_2, A\bfe_3).
\]

In the same fashion, one computes all the remaining traces $\tau_{(i_1.i_2,i_3)}$, which are zero if $i_1+i_2+i_3\geq 4$. The complete list follows, where $\tr(A)$ of a matrix $A$ is the usual ordinary trace computed as the sum of the elements of the first diagonal.
	
\[\tau_{(0,0,0)}(\Acal)=1,\]
\[\tau_{(1,0,0)}(\a)=\tr(A_1),\qquad \tau_{(0,1,0)}(\a)=\tr(A_2), \qquad \tau_{(0,0,1)}(\a)=\tr(A_3)\]
	
\centerline{and}
\bigskip
\begin{center}
\begin{tabular}{|c|cll|}\hline&&&\\
&&
$\begin{vmatrix}
a_{11}&b_{12}&0\\
a_{21}&b_{22}&0\\
a_{31}&b_{32}&1
\end{vmatrix}+
\begin{vmatrix}
a_{11}&0&b_{13}\\
a_{21}&1&b_{23}\\
a_{31}&0&b_{33}
\end{vmatrix}+
\begin{vmatrix}
1&a_{12}&b_{13}\\
0&a_{22}&b_{23}\\
0&a_{32}&b_{33}
\end{vmatrix}$&\\
$\tau_{(1,1,0)}(\a)=$&&&\\
&+&$\begin{vmatrix}
b_{11}&a_{12}&0\\
b_{21}&a_{22}&0\\
b_{31}&a_{32}&1
\end{vmatrix}+
\begin{vmatrix}
b_{11}&0&a_{13}\\
b_{21}&1&a_{23}\\
b_{31}&0&a_{33}
\end{vmatrix}+
\begin{vmatrix}
1&b_{12}&a_{13}\\
0&b_{22}&a_{23}\\
0&b_{32}&a_{33}
\end{vmatrix}$&\\ 
&&&
\\
\hline
\end{tabular}
\end{center}

\begin{center}
\begin{tabular}{|c|cll|}\hline&&&\\
&&
$\begin{vmatrix}
a_{11}&c_{12}&0\\
a_{21}&c_{22}&0\\
a_{31}&c_{32}&1
\end{vmatrix}+
\begin{vmatrix}
a_{11}&0&c_{13}\\
a_{21}&1&c_{23}\\
a_{31}&0&c_{33}
\end{vmatrix}+
\begin{vmatrix}
1&a_{12}&c_{13}\\
0&a_{22}&c_{23}\\
0&a_{32}&c_{33}
\end{vmatrix}$&
\\ $\tau_{(1,0,1)}(\a)=$&&&\\
&+&
$\begin{vmatrix}
c_{11}&a_{12}&0\\
c_{21}&a_{22}&0\\
c_{31}&a_{32}&1
\end{vmatrix}+
\begin{vmatrix}
c_{11}&0&a_{13}\\
c_{21}&1&a_{23}\\
c_{31}&0&a_{33}
\end{vmatrix}+
\begin{vmatrix}
1&c_{12}&a_{13}\\
0&c_{22}&a_{23}\\
0&c_{32}&a_{33}
\end{vmatrix}$&\\ &&&\\
\hline&&&\\
&&$\begin{vmatrix}
b_{11}&c_{12}&0\\
b_{21}&c_{22}&0\\
b_{31}&c_{32}&1
\end{vmatrix}+
\begin{vmatrix}
b_{11}&0&c_{13}\\
b_{21}&1&c_{23}\\
b_{31}&0&c_{33}
\end{vmatrix}+
\begin{vmatrix}
1&b_{12}&c_{13}\\
0&b_{22}&c_{23}\\
0&b_{32}&c_{33}
\end{vmatrix}$&\\
$\tau_{(0,1,1)}(\a)=$&&&\\
&+&
$\begin{vmatrix}
c_{11}&b_{12}&0\\
c_{21}&b_{22}&0\\
c_{31}&b_{32}&1
\end{vmatrix}+
\begin{vmatrix}
c_{11}&0&b_{13}\\
c_{21}&1&b_{23}\\
c_{31}&0&b_{33}
\end{vmatrix}+
\begin{vmatrix}
1&c_{12}&b_{13}\\
0&c_{22}&b_{23}\\
0&c_{32}&b_{33}
\end{vmatrix}$&\\
&&&\\\hline&&&\\
$\tau_{(2,0,0)}(\a)=$&&$\begin{vmatrix}
a_{11}&a_{12}&0\\
a_{21}&a_{22}&0\\
a_{31}&a_{32}&1
\end{vmatrix}+
\begin{vmatrix}
a_{11}&0&a_{13}\\
a_{21}&1&a_{23}\\
a_{31}&0&a_{33}
\end{vmatrix}+
\begin{vmatrix}
1&a_{12}&a_{13}\\
0&a_{22}&a_{23}\\
0&a_{32}&a_{33}
\end{vmatrix}$&\\ &&&\\
\hline &&&\\
$\tau_{(0,2,0)}(\a)=$&&$\begin{vmatrix}
b_{11}&b_{12}&0\\
b_{21}&b_{22}&0\\
b_{31}&b_{32}&1
\end{vmatrix}+
\begin{vmatrix}
b_{11}&0&b_{13}\\
b_{21}&1&b_{23}\\
b_{31}&0&b_{33}
\end{vmatrix}+
\begin{vmatrix}
1&b_{12}&b_{13}\\
0&b_{22}&b_{23}\\
0&b_{32}&b_{33}
\end{vmatrix}$&\\
&&&\\ 
\hline&&& \\ 
$\tau_{(0,0,2)}(\a)=$&&$\begin{vmatrix}
c_{11}&c_{12}&0\\
c_{21}&c_{22}&0\\
c_{31}&c_{32}&1
\end{vmatrix}+
\begin{vmatrix}
c_{11}&0&c_{13}\\
c_{21}&1&c_{23}\\
c_{31}&0&c_{33}
\end{vmatrix}+
\begin{vmatrix}
1&c_{12}&c_{13}\\
0&c_{22}&c_{23}\\
0&c_{32}&c_{33}
\end{vmatrix},$&\\&&& \\ \hline&&&\\
$\tau_{(2,1,0)}(\a)=$&&$\begin{vmatrix}
a_{11}&a_{12}&b_{13}\\
a_{21}&a_{22}&b_{23}\\
a_{31}&a_{32}&b_{33}
\end{vmatrix}+
\begin{vmatrix}
a_{11}&b_{12}&a_{13}\\
a_{21}&b_{22}&a_{23}\\
a_{31}&b_{32}&a_{33}
\end{vmatrix}+
\begin{vmatrix}
b_{11}&a_{12}&a_{13}\\
b_{21}&a_{22}&a_{23}\\
b_{31}&a_{32}&a_{33}
\end{vmatrix}$&\\&&& 
\\ \hline&&&\\
$\tau_{(1,2,0)}(\a)=$&&$\begin{vmatrix}
b_{11}&b_{12}&a_{13}\\
b_{21}&b_{22}&a_{23}\\
b_{31}&b_{32}&a_{33}
\end{vmatrix}+
\begin{vmatrix}
b_{11}&a_{12}&b_{13}\\
b_{21}&a_{22}&b_{23}\\
b_{31}&a_{32}&b_{33}
\end{vmatrix}+
\begin{vmatrix}
a_{11}&b_{12}&b_{13}\\
a_{21}&b_{22}&b_{23}\\
a_{31}&b_{32}&b_{33}
\end{vmatrix}$& \\&&&\\
\hline
\end{tabular}
\end{center}
\begin{center}
\begin{tabular}{|c|cll|}\hline&&&\\
$\tau_{(2,0,1)}(\a)=$&$\begin{vmatrix}
a_{11}&a_{12}&c_{13}\\
a_{21}&a_{22}&c_{23}\\
a_{31}&a_{32}&c_{33}
\end{vmatrix}+
\begin{vmatrix}
a_{11}&c_{12}&a_{13}\\
a_{21}&c_{22}&a_{23}\\
a_{31}&c_{32}&a_{33}
\end{vmatrix}+
\begin{vmatrix}
c_{11}&a_{12}&a_{13}\\
c_{21}&a_{22}&a_{23}\\
c_{31}&a_{32}&a_{33}
\end{vmatrix}$&&\\&&&\\
\hline &&&\\
$\tau_{(1,0,2)}(\a)=$&$\begin{vmatrix}
c_{11}&c_{12}&a_{13}\\
c_{21}&c_{22}&a_{23}\\
c_{31}&c_{32}&a_{33}
\end{vmatrix}+
\begin{vmatrix}
c_{11}&a_{12}&c_{13}\\
c_{21}&a_{22}&c_{23}\\
c_{31}&a_{32}&c_{33}
\end{vmatrix}+
\begin{vmatrix}
a_{11}&c_{12}&c_{13}\\
a_{21}&c_{22}&c_{23}\\
a_{31}&c_{32}&c_{33}
\end{vmatrix}$&&\\  &&&\\
\hline &&&\\
$\tau_{(0,2,1)}(\a)=$&$\begin{vmatrix}
b_{11}&b_{12}&c_{13}\\
b_{21}&b_{22}&c_{23}\\
b_{31}&b_{32}&c_{33}
\end{vmatrix}+
\begin{vmatrix}
b_{11}&c_{12}&b_{13}\\
b_{21}&c_{22}&b_{23}\\
b_{31}&c_{32}&b_{33}
\end{vmatrix}+
\begin{vmatrix}
c_{11}&b_{12}&b_{13}\\
c_{21}&b_{22}&b_{23}\\
c_{31}&b_{32}&b_{33}
\end{vmatrix}$&&\\&&&\\ \hline&&& \\
$\tau_{(0,1,2)}(\a)=$&$\begin{vmatrix}
c_{11}&c_{12}&b_{13}\\
c_{21}&c_{22}&b_{23}\\
c_{31}&c_{32}&b_{33}
\end{vmatrix}+
\begin{vmatrix}
c_{11}&b_{12}&c_{13}\\
c_{21}&b_{22}&c_{23}\\
c_{31}&b_{32}&c_{33}
\end{vmatrix}+
\begin{vmatrix}
b_{11}&c_{12}&c_{13}\\
b_{21}&c_{22}&c_{23}\\
b_{31}&c_{32}&c_{33}
\end{vmatrix}$&&\\ &&&
\\ \hline&&&
\\
&$\begin{vmatrix}
a_{11}&b_{12}&c_{13}\\
a_{21}&b_{22}&c_{23}\\
a_{31}&b_{32}&c_{33}
\end{vmatrix}+
\begin{vmatrix}
a_{11}&c_{12}&b_{13}\\
a_{21}&c_{22}&b_{23}\\
a_{31}&c_{32}&b_{33}
\end{vmatrix}+
\begin{vmatrix}
b_{11}&a_{12}&c_{13}\\
b_{21}&a_{22}&c_{23}\\
b_{31}&a_{32}&c_{33}
\end{vmatrix}+$&&\\
$\tau_{(1,1,1)}(\a)=$&&&\\
&$\begin{vmatrix}
b_{11}&c_{12}&a_{13}\\
b_{21}&c_{22}&a_{23}\\
b_{31}&c_{32}&a_{33}
\end{vmatrix}+
\begin{vmatrix}
c_{11}&b_{12}&a_{13}\\
c_{21}&b_{22}&a_{23}\\
c_{31}&b_{32}&a_{33}
\end{vmatrix}+
\begin{vmatrix}
c_{11}&a_{12}&b_{13}\\
c_{21}&a_{22}&b_{23}\\
c_{31}&a_{32}&b_{33}
\end{vmatrix}$&&\\ &&&\\ \hline
\end{tabular}
\end{center}
	
	and, finally
	\[
	\tau_{(3,0,0)}(\a)=\det(A),\qquad
	\tau_{(0,3,0)}(\a)=\det(B),\quad
	\tau_{(0,0,3)}(\a)=\det(C). 
	\]
\end{exam}

\begin{nota}\label{finnot}
Let $\a=(A_1, \cdots,A_n)$ be $n\times n$ matrices.
For each $ 1\leq k\leq n $, let
$\d_{\bfe_{i_1}+\cdots+\bfe_{i_k}}(\a)$ be the scalar
defined as follows
\[
\d_{\bfe_{i_1}+\cdots+\bfe_{i_k}}(\a)=
\det(\bfe_1, \cdots, A_{i_1}\bfe_{i_1}, \cdots , 
A_{i_k}\bfe_{i_k}, \cdots, \bfe_n).
\]
\end{nota}
\noindent
For example, in case of a triple $\a:=(A,B,C)$ of $3\times 3$ matrices:
\begin{equation*}
\begin{array}{ll}
\d_{(1,0,0)}(\a)=\det(A\bfe_1,\bfe_2,\bfe_3),\quad &\d_{(0,1,0)}(\a)=
\det(\bfe_1, B\bfe_2,\bfe_3),\cr\cr
\d_{(0,0,1)}(\a)=\det(\bfe_1,\bfe_2, C\bfe_3),
\quad &\d_{(1,1,0)}(\a)=\det(A\bfe_1, B\bfe_2,\bfe_3),\cr\cr
\d_{(1,0,1)}(\a)=\det(A\bfe_1,\bfe_2, C\bfe_3),\quad
&\d_{(0,1,1)}(\a)=\det(\bfe_1, B\bfe_2, C\bfe_3),
\end{array}
\end{equation*}
and, finally, 
$
\d_{(1,1,1)}(\a)=\det(A\bfe_1, B\bfe_2, C\bfe_3),
$
where $(\bfe_1,\bfe_2,\bfe_3)$ denotes, as usual, the canonical basis, so that, e.g., $A\bfe_j$ denotes the $j$-th column of $A$.

\begin{exam}\label{ex:exam52} Notation as in \ref{finnot}. Consider the tri-linear form
$$
\left\{\begin{matrix}\KK^{3\times 3}\times \KK^{3\times 3}\times\KK^{3\times 3}&\longrightarrow &\KK^{3\times 3}\cr\cr
(A,B,C)&\longmapsto& A\star B\star C\end{matrix}\right.
$$
where
\begin{eqnarray}
A\star B\star C &=&ABC\cr\cr
&-&\d_{(1,0,0)}(\a)BC-\d_{(0,1,0)}(\a)CA-\d_{(0,0,1)}(\a)AB\cr \cr
&+&\d_{(1,1,0)}(\a)C+\d_{(0,1,1)}(\a)A+\d_{(1,0,1)}(\a)B\cr \cr
&-&\det(A\bfe_1,B\bfe_2,C\bfe_3)\cdot \mathbbm{1}_{3\times 3}.\label{eq:trilin}
\end{eqnarray}
By applying  the very definition of the traces, taking into account the occurring permutations, the coefficients of the matrix product $BC$ and $CB$ in
 the sum
\begin{equation}\label{eq5:lsteq}
A\star B\star C+
B\star A\star C+
C\star A\star B+
B\star C\star A+
A\star C\star B+
C\star B\star A,
\end{equation}
are easily seen to be, respectively:
$$
\left(\d_{(1,0,0)}(A,B,C)+\d_{(0,1,0)}(C,A,B)+\d_{(0,0,1)}(B,C,A)\right)BC=\tr(A)BC
$$
and
$$
\left(\d_{(1,0,0)}(A,C,B)+\d_{(0,1,0)}(B,A,C)+\d_{(0,0,1)}(C,B,A)\right)CB=\tr(A)CB,
$$
having noticed that
\[\d_{(1,0,0)}(A,B,C)+\d_{(0,1,0)}(C,A,B)+\d_{0,0,1}(B,C,A)=\tr(A)={\tau_{(1)}(A)}.
\]
Therefore, by interchanging the roles of $A,B,C$, one sees that the quadratic part in the sum \eqref{eq5:lsteq} is given by:
\begin{equation}\label{eq:trs}
\tr(A)(BC+CB)+\tr(B)(AC+CA)+\tr(C)(AB+BA).
\end{equation}
We know that $\tr(A)=\tau_{(1,0,0)}(\a)$, $\tr(B)=\tau_{(0,1,0)}(\a)$, and $\tr(C)=\tau_{(0,0,1)}(\a)$. Therefore, \eqref{eq:trs} is equal to
\[
\tau_{(1,0,0)}(\a)(BC+CB)+\tau_{(0,1,0)}(\a)(AC+CA)+\tau_{(0,0,1)}(\a)(AB+BA).
\]

\noindent
A similar analysis shows that the coefficients of $A,B,C$ occurring in expression \eqref{eq5:lsteq} are given respectively by $\tau_{(0,1,1,)}(\Acal)$, $\tau_{(1,0,1,)}(\Acal)$ and $\tau_{(1,0,0,)}(\Acal)$. Just to show how the computations go on, the coefficient of $C$ would be:
\begin{equation*}
\begin{split}
\big(&\det((A\bfe_1,B\bfe_2,\bfe_3)+\det(A\bfe_1,\bfe_2,B\bfe_3)
+\det(B\bfe_1,A\bfe_2,\bfe_3)+\\
&\det(\bfe_1,A\bfe_2,B\bfe_3)
+\det(\bfe_1,B\bfe_2,A\bfe_3)+\det(B\bfe_1,\bfe_2,A\bfe_3)\big)C,
\end{split}
\end{equation*}
which is exactly the expression of $\tau_{(1,1,0)}(\a)C$. Finally, 
the coefficient of the identity matrix $\mathbbm{1}_{3\times 3}$ occurring in expression~\eqref{eq5:lsteq} is:
\begin{equation*}
\begin{split}
&\det(A\bfe_1,B\bfe_2,C\bfe_3)+\det(A\bfe_1,C\bfe_2,B\bfe_3)+
\det(B\bfe_1,A\bfe_2,C\bfe_3)+\det(B\bfe_1,C\bfe_2,A\bfe_3)\\[5pt]
&+\det(C\bfe_1,B\bfe_2,A\bfe_3)+\det(C\bfe_1,A\bfe_2,B\bfe_3),
\end{split}
\end{equation*}
which is equal to $\tau_{(1,1,1)}(\a)$.
The sum  \eqref{eq5:lsteq} can be then rewritten as:
\begin{equation*}
\setlength\arraycolsep{1pt}
\begin{array}{ccl}
ABC&+&BAC+CAB+BCA+ACB+CBA+\tau_{(1,0,0)}(\a)(BC+CB)\cr\cr 
&+&\tau_{(0,1,0)}(\a)(AC+CA)
+\tau_{(0,0,1)}(\a)(AB+BA)+\tau_{(1,1,0)}(\a)C\cr\cr 
&+&\tau_{(1,0,1)}(\a)B+\tau_{(0,1,1)}(\a)A
+\tau_{(1,1,1)}(\a)\mathbbm{1}_{3\times 3},
\end{array}
\end{equation*}
which, by Theorem \ref{thm:thm47}, is zero. 
In other words,  Theorem \ref{thm:thm47} can be phrased as follows.
{\em The tri-linear form given by \eqref{eq:trilin} is totally anti-symmetric.}
\end{exam}
\begin{exam}
	Let $\underline{\Bcal}=(A,B)$ a pair of $3\times 3$ matrices $A(i,j)=a_{ij}$ and $B(i,j)=b_{ij}$, thought of  as endomorphisms on $V=\KK^3$.
	Consider the unique HS--derivation 
\[\OvD(z_1,z_2)\colon \bigwedge V
\rightarrow \bigwedge V\llb z_1,z_2 \rrb\]
	such that
$\OvD(z_1,z_2)=\mathbbm{1} -Az_1-Bz_2,
	$  and let us look at the coefficients of the third degree of \eqref{prop1} applied to an element of $\bw^3V$. 
	In particular we look at the coefficients of $ z_1^2z_2 $ in \eqref{for2.3}, and compute the trace $\tau_{(2,0,0)}(\underline{\Bcal})$. 
	\begin{equation}
	\begin{split}
	\OvD(\uz)(\bfe_1\w &\bfe_2\w \bfe_3)=\OvD \bfe_1 \w\OvD \bfe_2\w\OvD \bfe_3=\bfe_1\w \bfe_2\w \bfe_3\\
	&-(A\bfe_1\w \bfe_2\w \bfe_3+\bfe_1\w A\bfe_2\w \bfe_3+\bfe_1\w \bfe_2\w A\bfe_3)z_1+\cdots\\
	&-(A\bfe_1\w A\bfe_2\w B\bfe_3+A\bfe_1\w B\bfe_2\w A\bfe_3+B\bfe_1\w A\bfe_2\w A\bfe_3)z_1^2 z_2\\
	&+\cdots
	\end{split}
	\end{equation}
	from which:
	\begin{equation}
	\begin{split}
	&\tau_{(2,1,0)}(\underline{\Bcal})(\bfe_1\w \bfe_2\w \bfe_3)=\\
	& (A\bfe_1\w A\bfe_2\w B\bfe_3+A\bfe_1\w B\bfe_2\w A\bfe_3+B\bfe_1\w A\bfe_2\w A\bfe_3).
	\end{split}
	\end{equation}
	The coefficient $\tau_{(2,0,0)}(\Bcal)$ also is computed similarly.
By an argument similar to Theorem \ref{thm:thm47} one obtains:
	\begin{equation}\label{eq17}
	\begin{split}
	&(A^2B+AB A+B A^2) - \tau_{(1,0,0)}(\underline{\Bcal})(AB+B A)\\[5pt]
	& - \tau_{(0,1,0)}(\underline{\Bcal}) A^2 
	+\tau_{(2,0,0)}(\underline{\Bcal}) B+
	\tau_{(1,1,0)}(\underline{\Bcal}) A
	-\tau_{(2,1,0)}(\underline{\Bcal}) \mathbbm{1}_{3\times 3}=0.
	\end{split}
	\end{equation}
\end{exam}
\begin{exam}
In particular, if in the example \ref{extraces} we put $A=C$, then:
\begin{equation}
A\star A\star B+B\star A\star A+A\star B\star A=0,\label{eq:fin}
\end{equation}
which is exactly the content of \eqref{eq17}. Once again, putting $A=B$ in 
\eqref{eq:fin} (i.e. $A=B=C$ in \eqref{eq5:lsteq}) 
one obtains the Cayley--Hamilton theorem for $3\times 3$ matrices (by clearing a factor $3$):
\begin{equation}\label{eq:final}
A\star A\star A=A^3-\text{tr}(A)A^2+\tau_{(2,0,0)}(\a)A - \det(A)\mathbbm{1}_{3\times 3}=0.
\end{equation}
\end{exam}

\medskip
\noindent
{\bf Acknowledgement.}
Warm words of gratitude are due to the Referee. Her/his patient and carefully reading of the first versions of the manuscript helped me to greatly improve the exposition.

\vspace{-15pt}

\end{document}